\newcommand{\arxiv}[1]{%
 \href{https://arxiv.org/pdf/#1.pdf}{ArXiv:#1}}
\newtheorem*{theorem*}{Theorem}
\newtheorem{thmx}{Theorem}
\newtheorem{theorem}{Theorem}[section]
\newaliascnt{lemma}{theorem}
\newtheorem{lemma}[lemma]{Lemma}
\newaliascnt{proposition}{theorem}
\newtheorem{proposition}[proposition]{Proposition}
\newaliascnt{corollary}{theorem}
\newaliascnt{conjecture}{theorem}
\newaliascnt{openQ}{theorem}
\newaliascnt{quest}{theorem}
\newaliascnt{questx}{conjx}
\theoremstyle{definition}
\newaliascnt{defn}{theorem}
\newaliascnt{example}{theorem}
\newaliascnt{rem}{theorem}
\def\tagform@#1{\maketag@@@{\ignorespaces#1\unskip\@@italiccorr}}
\let\orgtheequation\theequation
\def\theequation{(\orgtheequation)}
\def\equationautorefname~{}
\DeclareMathAlphabet{\mathpzc}{OT1}{pzc}{m}{it}
\newcommand{\e}{\varepsilon}
\newcommand{\B}{{\mathbb B}}
\newcommand{\Bn}{{{\mathbb B}^n}}
\newcommand{\D}{\mathbb{D}}
\newcommand{\Jac}{\operatorname{J}}
\newcommand{\R}{{\mathbb R}}
\newcommand{\Rn}{{{\mathbb R}^n}}
\newcommand{\vm}[1]{\left|#1\right|}
\newcommand{\mt}{\mathfrak{g}}
\newcommand{\dlth}{\Delta_{hyp}}
\newcommand{\dlts}{\Delta_{\theta}}
\newcommand{\tsp}{\Theta}
\newcommand{\arctanh}{\operatorname{arctanh}}
\begin{document}
\title[Third Neumann eigenvalue]{Two balls maximize the third Neumann eigenvalue in hyperbolic space}

\keywords{degree theory, vibrating membrane, spectral theory, shape optimization}
\subjclass[2010]{\text{Primary 35P15. Secondary 55M25}}

	\begin{abstract}
We show that the third eigenvalue of the Neumann Laplacian in hyperbolic space is maximal for the disjoint union of two geodesic balls, among domains of given volume.
This extends a recent result by Bucur and Henrot in Euclidean space, while providing a new proof of a key step in their argument
	\end{abstract}
	
\author[]{P. Freitas and R. S. Laugesen}
\address{Departamento de Matem\'atica, Instituto Superior T\'ecnico, Universidade de Lisboa, Av. Rovisco Pais 1,
P-1049-001 Lisboa, Portugal {\rm and}
Grupo de F\'isica Matem\'atica, Faculdade de Ci\^encias, Universidade de Lisboa,
Campo Grande, Edif\'icio C6, P-1749-016 Lisboa, Portugal}
\email{psfreitas@fc.ul.pt}
\address{Department of Mathematics, University of Illinois, Urbana,
	IL 61801, U.S.A.}
\email{Laugesen@illinois.edu}

	\maketitle	

\section{\bf Introduction and results} \label{sec:intro}

Which shape maximizes the second eigenvalue of the Neumann Laplacian among Euclidean regions of given volume? This question, which arises naturally in the wake of the Rayleigh--Faber--Krahn
result for the Dirichlet Laplacian, was answered in the 1950s by Szeg\H{o}~\cite{S54} for two-dimensional simply--connected domains and by Weinberger~\cite{W56} for the general case in all
dimensions, the answer being the ball. 

Which shape maximizes the third eigenvalue of the Neumann Laplacian? This time the answer is a disjoint union of two equal
balls. The proof of this result had to wait for more than 50 years, and it was
again proved first for simply--connected domains in the plane, by Girouard, Nadirashvili and Polterovich in 2009~\cite{GNP09}, and in full generality by Bucur and Henrot in 2019~\cite{BH19}. 

\subsection{Hyperbolic results} In this paper we extend the latter result to hyperbolic space, showing that two disjoint geodesic balls of the same volume maximize the third Neumann
eigenvalue of the Laplace--Beltrami operator among regions of given volume. More precisely, our main result is the following, in hyperbolic spaces of dimension $n$ greater than or equal to $2$.
\begin{thmx}[Third hyperbolic Neumann eigenvalue is maximal for two balls]~\label{th:hyperbolic2ball}\ 
 Let $\Omega$ be a bounded open set with Lipschitz boundary in the hyperbolic space $\mathbb{H}^{n}$ of constant negative curvature $\kappa$, and
 denote the eigenvalues of the Neumann Laplacian in $\Omega$ by $0=\eta_{1}\leq \eta_{2} \leq \eta_{3} \leq \cdots.$ Then
 \[
   \eta_3(\Omega) \leq \eta_3(B \sqcup B),
 \]
 where $B$ is a ball having half the hyperbolic volume of $\Omega$. Equality holds if and only if $\Omega$ is a disjoint union of two balls 
 in $\mathbb{H}^{n}$, of equal hyperbolic volume. 
\end{thmx}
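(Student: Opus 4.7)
\emph{Proof plan.} The approach combines a Weinberger--Chavel--Ashbaugh--Benguria test-function construction with a degree-theoretic centering argument. Since the Neumann spectrum of $B\sqcup B$ is the union (with multiplicity) of the spectrum of each copy of $B$, with the eigenvalue $0$ appearing twice, one has $\eta_3(B\sqcup B)=\eta_2(B)$ for $|B|=|\Omega|/2$. Writing $V=|\Omega|$ and $B=B^{V/2}$ for a geodesic ball of volume $V/2$, the assertion therefore reduces to showing $\eta_3(\Omega)\le \eta_2(B^{V/2})$.

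First I would handle the case when $\Omega$ is disconnected. If $\Omega$ decomposes into components $\omega_1,\omega_2,\dots$, its Neumann spectrum is the union with multiplicity of the spectra of the $\omega_i$'s. Using the hyperbolic Szeg\H{o}--Weinberger inequality (due to Chavel and to Ashbaugh--Benguria) on each component, together with the strict monotonicity of $\eta_2$ on geodesic balls as a function of volume, a short case analysis on how the eigenvalues of the individual components interleave shows that among multi-component admissible $\Omega$ the quantity $\eta_3$ is maximized precisely by the disjoint union of two equal balls.

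For connected $\Omega$ the plan is to bisect $\Omega$ by a totally geodesic hypersurface $H\subset \mathbb{H}^n$, producing open pieces $\Omega_{+}$, $\Omega_{-}$ of equal volume $V/2$, and to build two trial functions $\varphi_1,\varphi_2 \in H^{1}(\Omega)$ of Weinberger type. Letting $g$ denote the radial profile of the second Neumann eigenfunction on $B^{V/2}$, extended constantly past its radius, I would set
\[
  \varphi_j(x) \;=\; g\bigl(d(x,p_j^{\pm})\bigr)\,\theta_j^{\pm}(x) \qquad \text{for }x\in \Omega_{\pm},
\]
where $p_j^{-}$ is the isometric reflection of $p_j^{+}$ across $H$ and the angular factors $\theta_j^{\pm}$ are unit tangent directions at $p_j^{\pm}$, transported radially and paired so that the two halves glue continuously along $H$ into a genuine element of $H^{1}(\Omega)$. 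The hyperbolic Chavel--Ashbaugh--Benguria identity then bounds the Rayleigh quotient $\mathcal{R}(\varphi_j)$ on each half $\Omega_{\pm}$ by $\eta_2(B^{V/2})$, and hence on $\Omega$. Provided $\varphi_1,\varphi_2$ and the constant $1$ form a three-dimensional subspace of $H^{1}(\Omega)$ with the correct orthogonality, the min-max principle yields $\eta_3(\Omega)\le \eta_2(B^{V/2})$.

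The main obstacle, and the step where the paper advertises a new argument, is topological: one must choose $H$, the centers $p_j^{\pm}$, and the directions $\theta_j^{\pm}$ so that (i) $H$ bisects $\Omega$ by volume, (ii) each $\varphi_j$ has zero mean on $\Omega$, and (iii) $\varphi_1,\varphi_2$ are $L^{2}$-orthogonal, simultaneously. This is a system of constraints on a parameter space built from $\mathbb{H}^n$ and its unit tangent bundle, which I would attempt to solve by exhibiting a continuous map whose Brouwer degree is nonzero, computing the degree by analyzing what happens to the map when the centers are pushed toward the ideal boundary of $\mathbb{H}^n$. Equality in the theorem would then be extracted from the rigidity of the Chavel--Ashbaugh--Benguria inequality: saturation forces each half $\Omega_{\pm}$ to be a geodesic ball of volume $V/2$.
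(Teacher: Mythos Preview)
Your orthogonality setup misses the crucial constraint. To bound $\eta_3(\Omega)$ via a Rayleigh quotient you must test against a function orthogonal to \emph{both} the constant and a second eigenfunction $f$ of $\Omega$; your conditions (i)--(iii) impose only mean zero and mutual $L^2$-orthogonality of $\varphi_1,\varphi_2$. The three-dimensional min-max you invoke would require $\sup\{Q[u]:u\in\operatorname{span}(1,\varphi_1,\varphi_2)\}\le\eta_2(B)$, but knowing $Q[\varphi_1],Q[\varphi_2]\le\eta_2(B)$ and $\langle\varphi_1,\varphi_2\rangle=0$ does not control the cross term $\int\nabla\varphi_1\cdot\nabla\varphi_2$, so linear combinations can have larger quotient. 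The paper instead constructs the full $n$-component vector field $v_H=v\circ T_{-c_H}\circ\Phi_H$ (a hyperbolic fold composed with a M\"obius translation), first fixes $c_H$ uniquely by the center-of-mass condition $\int_\Omega v_H\,d\gamma=0$, and then uses Petrides's reflection-symmetry degree theorem on $S^{n-1}$ to locate a hyperbolic halfspace $H=H_{p,t}$ for which $\int_\Omega v_H f\,d\gamma=0$. This orthogonality to $f$ is exactly the step your plan omits, and it is where the topological input is spent.

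Two further structural mismatches would block your argument in dimensions $n\ge3$. First, only with all $n$ components $v_{H,j}=g(r)x_j/r$ does Weinberger's summation trick (the identity $\sum_j|\nabla(g\,x_j/r)|^2=g'^2+(n-1)r^{-2}g^2$) eliminate the angular dependence and reduce the estimate to a one-variable mass-transplantation inequality; two directions $\theta_1,\theta_2$ do not suffice. Second, the paper never bisects $\Omega$ into equal-volume halves: the fold parameter $t$ is free (and is precisely the extra degree of freedom consumed by orthogonality to $f$), and the transplantation lemma only uses $V_\gamma(\Omega_L)+V_\gamma(\Omega_U)=2V_\gamma(B)$, not $V_\gamma(\Omega_L)=V_\gamma(\Omega_U)$. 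Your equal-volume constraint would use up that freedom and leave you short of parameters. Finally, ``Weinberger on each half'' would force $p_j^{\pm}$ to be the Weinberger center of $\Omega_{\pm}$, a single determined point, so the $p_j^{\pm}$ cannot also be chosen to enforce further orthogonalities; the paper avoids this by using one center $c_H$ and mass transplantation rather than applying Szeg\H{o}--Weinberger separately on each piece.
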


As in Bucur and Henrot's proof in the Euclidean setting, our approach builds on Weinberger's method for the second eigenvalue. In his proof,
the required orthogonality is to the first (constant) eigenfunction, and thus amounts to obtaining trial functions with zero average.
For the third eigenvalue, the trial functions must have zero average and also be orthogonal to the first excited state. This construction is made possible, at its heart, by the fact that the eigenvalue $\eta_2(B)$ of the ball has multiplicity $n$. More precisely, $\eta_3(B \sqcup B)$ has multiplicity $2n$ because the spectrum of a disjoint union is simply the union of the spectra and so the first three eigenvalues of the disjoint union of two balls satisfy
\[
0 = \eta_1(B \sqcup B) = \eta_2(B \sqcup B) < \eta_3(B \sqcup B) = \eta_2(B) .
\]
This multiplicity will be instrumental in constructing trial functions that satisfy the required orthogonality conditions. Further, we provide a new way of identifying such trial functions. This method also works in Euclidean space, yielding a new proof of the central step in Bucur and Henrot's result.

Another key ingredient in our proof is a degree theory computation of Petrides~\cite{P14} for maps between spheres, and here too we present
what we believe to be a simpler, global proof. The proof of~\autoref{th:hyperbolic2ball} is then covered in the second part of the paper, beginning
in~\autoref{sec:hyperbolic2ballproof}. 

We recall that the hyperbolic analogue of the Szeg\H{o}--Weinberger result for the second eigenvalue, asserting maximality of the ball under a hyperbolic volume constraint, was proved by Bandle \cite{B72,B80} for $2$-dimensional simply connected surfaces, with the general higher dimensional case later mentioned by Chavel~\cite[p.\ 80] {C80}, \cite[pp.\,43--44]{C84}. Detailed expositions were provided by Ashbaugh
and Benguria \cite[{\S}6]{AB95} and Xu \cite[Theorem 1]{X95}, and the requisite center of mass lemmas can be found in Benguria and Linde \cite[Theorem 6.1]{BL07} and Laugesen~\cite[Corollary 2]{L20b}.

\subsection{Euclidean results}  Since our methods apply both to Euclidean and hyperbolic spaces, we first carry out the proof in the former case, as this corresponds to a simpler set-up and, we believe, makes the general approach clearer.

Write $\B=\Bn$ for the unit ball in $\Rn$, where $n \geq 2$. Let $\Omega \subset \Rn$ be a bounded open set with Euclidean volume $\vm{\Omega}$ and Lipschitz boundary. The Lipschitz requirement restricts $\Omega$ to having finitely many components. The Neumann
eigenvalue problem for the Laplacian is
\begin{equation*}\label{neumannproblem}
\begin{split}
- \Delta u & = \mu u \ \quad \text{in $\Omega$} \\
\frac{\partial u}{\partial\nu} & = 0 \qquad \text{on $\partial \Omega$} 
\end{split}
\end{equation*}
and, under the above conditions, its spectrum is discrete with eigenvalues satisfying 
\[
0 = \mu_1 \leq \mu_2 \leq \mu_3 \leq \cdots \to \infty .
\] 
The maximization result for $\mu_{3}$, obtained by Bucur and Henrot in~\cite{BH19}, is the following.
\begin{thmx}[Bucur and Henrot: third Neumann eigenvalue is maximal for two balls] \label{th:Euclidean2ball}
If $\Omega \subset \Rn$ is a bounded open set with Lipschitz boundary then
\begin{equation*}\label{ineq:main}
  \mu_3(\Omega) \vm{\Omega}^{2/n} \leq \mu_3(\B\sqcup\B) (2\vm{\B})^{2/n}.
\end{equation*}
Equality holds if and only if $\Omega$ is a disjoint union of two balls of equal volume. 
\end{thmx}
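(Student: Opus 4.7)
The plan is to bound $\mu_3(\Omega)$ from above by exhibiting a three-dimensional subspace of $H^1(\Omega)$ on which every Rayleigh quotient is at most $\mu_3(\B\sqcup\B)=\mu_2(B^*)$, where $B^*$ is the Euclidean ball of volume $\vm{\Omega}/2$. By the Courant--Fischer characterization
\[
\mu_3(\Omega) = \min_{\substack{V\subset H^1(\Omega)\\ \dim V=3}}\ \max_{v\in V\setminus\{0\}}\ \frac{\int_\Omega|\nabla v|^2}{\int_\Omega v^2},
\]
this yields the stated inequality after the standard volume rescaling.

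The constant function contributes one dimension; the remaining two will come from Weinberger-type trial functions supported on a partition of $\Omega$ into two equal-volume pieces. First I would use the intermediate value theorem applied to a translated family of parallel hyperplanes to produce $\Omega=\Omega^+\sqcup\Omega^-$ with $\vm{\Omega^\pm}=\vm{\Omega}/2$. Let $G:[0,\infty)\to[0,\infty)$ denote the radial profile of the second Neumann eigenfunction on $B^*$, extended by $G(R^*)$ for $r\ge R^*$. For each piece, a Brouwer-degree argument applied to the continuous map $p\mapsto\int_{\Omega^\pm} G(\vm{x-p})(x-p)/\vm{x-p}\,dx$, whose value behaves like $-G(R^*)\vm{\Omega^\pm}\,p/\vm p$ as $\vm p\to\infty$, produces a center $p^\pm\in\Rn$ at which this vector integral vanishes. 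Set $\vec V^\pm(x):=G(\vm{x-p^\pm})(x-p^\pm)/\vm{x-p^\pm}$ and define scalar candidates $w^\pm_{\vec a}:=\vec a\cdot \vec V^\pm$, extended by zero off $\Omega^\pm$. Then $\int_\Omega w^\pm_{\vec a}=0$ for every $\vec a\in\Rn$; and since $w^+$ and $w^-$ have disjoint supports, $w^+_{\vec a}\perp w^-_{\vec b}$ in $L^2(\Omega)$ for all choices.

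For the Rayleigh control, the pointwise identities
\[
\sum_{j=1}^n|\nabla V^\pm_j|^2 = G'(r)^2+(n-1)\frac{G(r)^2}{r^2},\qquad \sum_{j=1}^n(V^\pm_j)^2=G(r)^2,
\]
together with Weinberger's rearrangement inequality (using that $G'(r)^2+(n-1)G(r)^2/r^2$ is radially decreasing and $G(r)^2$ is radially increasing), bound the trace ratio:
\[
\frac{\sum_{j=1}^n\int_{\Omega^\pm}|\nabla V^\pm_j|^2\,dx}{\sum_{j=1}^n\int_{\Omega^\pm}(V^\pm_j)^2\,dx}\;\le\;\mu_2(B^*).
\]
The elementary inequality $\min_j(N_j/D_j)\le(\sum_j N_j)/(\sum_j D_j)$ then supplies, on each piece, a coordinate direction $\vec a^\pm\in\{e_1,\ldots,e_n\}$ whose associated Rayleigh quotient is at most $\mu_2(B^*)$. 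Taking $w_1:=w^+_{\vec a^+}$ and $w_2:=w^-_{\vec a^-}$, the vanishing of all cross terms (disjoint supports in both $L^2$ and the Dirichlet form, and zero means against the constant) gives Rayleigh quotient at most $\mu_2(B^*)$ for every element of $\mathrm{span}\{1,w_1,w_2\}$, which completes the upper bound. Equality is handled by the rigidity in the rearrangement step, which forces each $\Omega^\pm$ to equal $B(p^\pm,R^*)$ up to a null set, so $\Omega$ is a disjoint union of two balls of equal volume.

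The main obstacle is the center-of-mass step, where one must guarantee existence of the points $p^\pm$ via a degree-theoretic (Brouwer) argument; this is the nontrivial Petrides-type ingredient advertised in the introduction, for which the authors promise a streamlined global proof unifying the partition and the center selection into a single topological statement. A secondary technical point is the radial monotonicity of $G'(r)^2+(n-1)G(r)^2/r^2$, a standard Weinberger profile lemma in the Euclidean case but one that will need appropriate replacements when one adapts the scheme to prove \autoref{th:hyperbolic2ball} in hyperbolic space, where ``hyperplane'' must also be replaced by a totally geodesic hypersurface and the center-of-mass interpreted in an invariant sense.
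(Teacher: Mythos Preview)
The proposal has a fatal gap: your trial functions $w_1,w_2$ are not in $H^1(\Omega)$. You define $w^\pm_{\vec a}=\vec a\cdot\vec V^\pm$ on $\Omega^\pm$ and extend by zero to the complementary piece. But unless $\Omega$ happens to be disconnected with the hyperplane separating whole components, the interface $\Omega\cap\{\text{hyperplane}\}$ is a nonempty $(n-1)$-dimensional set, and $w^+$ jumps there: on the $\Omega^+$ side it approaches the generically nonzero value $G(\vm{x-p^+})\,(x-p^+)\cdot\vec a^+/\vm{x-p^+}$, while on the $\Omega^-$ side it is identically zero. The distributional gradient of $w^+$ therefore carries a singular part on the interface, so $w^+\notin H^1(\Omega)$, the Rayleigh quotient is not defined, and the Courant--Fischer characterization cannot be applied to $\mathrm{span}\{1,w_1,w_2\}$. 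If your argument worked, the Bucur--Henrot theorem would follow from two independent copies of Weinberger's proof; it does not, for exactly this reason.

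This obstruction is precisely what forces the paper (following Girouard--Nadirashvili--Polterovich and Bucur--Henrot) to use a \emph{fold map} rather than a cut: one composes the Weinberger vector field with the reflection-fold $F_H$ across the hyperplane, producing trial functions $v(F_H(y)-c_H)$ that are \emph{even} across $\partial H$ and hence continuous, so genuinely in $H^1(\Omega)$. The price is that the folded trial functions are no longer automatically orthogonal to the second eigenfunction $f$ (your disjoint-support orthogonality is lost), and arranging $\int_\Omega v_H f\,dy=0$ for some choice of hyperplane is the real difficulty. That is where Petrides's degree theorem enters---not in the center-of-mass step, which for a single piece is just the standard Brouwer/Weinberger argument you sketch. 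Your identification of the ``main obstacle'' is therefore misplaced: locating $p^\pm$ is routine, but the construction collapses before that step matters.
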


One of the goals of this paper is to present a proof of \autoref{th:Euclidean2ball} that is different at its homotopic core from
Bucur and Henrot's proof. That argument begins in \autoref{sec:Euclidean2ballproof}, and a discussion of the similarities and
differences between the two proofs is provided in~\autoref{sec:simdiff}.

\subsubsection*{Remarks on the literature.}
Girouard, Nadirashvili and Polterovich~\cite[Theorem 1.1.3]{GNP09} first proved the result on the third eigenvalue for simply connected domains in the plane, and Girouard and Polterovich \cite[Theorem 1.7]{GP10} extended the argument to surfaces with variable nonpositive curvature.
Their Neumann trial functions in the unit disk were modified to Euclidean space by Bucur and Henrot \cite{BH19}, and adapted to Robin eigenvalues in the planar
case by Girouard and Laugesen \cite{GL19}. 

Girouard, Nadirashvili and Polterovich worked in terms of  measures folded across hyperplanes (which in their context meant hyperbolic geodesics in the disk), and 
they maximized over $2$-dimensional spaces of trial functions. Girouard and Laugesen clarified the construction by composing trial functions with a fold map in order 
to obtain trial functions that are even across the hyperplane, and they used uniqueness of the center of mass point to reduce from a $2$-dimensional trial space to 
a single trial function. This function depends on a parameter lying on the circle, and so the degree theory required to finish the proof is a straightforward winding 
number argument. 

Bucur and Henrot worked in the Euclidean context in all dimensions, and employed a different parameterization for what is essentially the same family of trial functions. 
The degree theory required to finish their proof in higher dimensions is more difficult: they finished with an ingenious two-step homotopy argument for their vector 
field in $\R^{2n}$. 

One contribution of the current paper is what we believe to be a conceptually simpler proof of Bucur and Henrot's ``two ball''  \autoref{th:Euclidean2ball}. We adapt the 
parameterization of Girouard, Nadirashvili and Polterovich to Euclidean space in all dimensions, and rely on uniqueness of
the center of mass point together with a beautiful degree theory result of Petrides \cite{P14} for maps between spheres (\autoref{th:degree} below). 

A ``relaxed'' version of the eigenvalue maximization problem, in which the indicator function of the region $\Omega$ is replaced by a weight function, was treated by Bucur 
and Henrot \cite[Theorem 3]{BH19}. Their weight could have unbounded support. We shall not pursue such generalizations here.

\subsection{Future directions} Now that the third Neumann eigenvalue is known to be maximal for two disjoint balls in spaces of
non-positive constant curvature (Euclidean and hyperbolic), it is natural to ask the same question in the positive curvature case,
that is, for domains in the standard round sphere. The analogue of Weinberger's result for the second eigenvalue is known
to hold for domains contained in a hemisphere and certain other domains~\cite{AB95}, but is not known to hold in general. Thus some restriction on the domain might be needed to get a result on the third eigenvalue. 

Another extension would be to higher eigenvalues: one might ask whether the fourth eigenvalue is maximal for three disjoint balls.
This turns out not to be the case. Numerical work by Antunes and Freitas~\cite[Section 4]{AF12} (see also the chapter by Antunes and Oudet in~\cite{H17}) suggests the maximizer for $\mu_4$ among Euclidean domains of given area is not a union of
disks, but rather a domain that looks somewhat like three touching disks with the joining regions smoothed out. 

Averaging the eigenvalues can improve their behavior and lead to a positive result. Notably, the harmonic mean of $\mu_2,\dots,\mu_n$ is maximal for the ball, among domains in $\Rn$ of given
volume, by a recent result of Wang and Xia \cite[Theorem 1.1]{WX20} that directly strengthens Weinberger's theorem for $\mu_2$. They also prove the analogous result for domains in hyperbolic space \cite[Theorem 1.2]{WX20}. Thus, for example, Wang and Xia's result implies the harmonic mean of $\mu_2$ and $\mu_3$ is maximal for the ball, among domains of given volume in $n$ dimensions ($n \geq 3$), while Bucur and Henrot's theorem in euclidean space and ours in hyperbolic space say that $\mu_3$ by itself is maximal for the disjoint union of two balls. Looking now to the future, it is an open problem whether the spectral zeta function $\sum_{j=2}^\infty \mu_j^{-s}$ is minimal for the ball, when $s>n/2$. If true, it would in a sense extend Wang and Xia's result to the full spectrum. 

Finally, a further avenue of investigation would be an extension to the Robin problem, in line with our results in~\cite{FL20,FL18a}. In these papers it
was shown that it is possible to connect the Neumann and Steklov isoperimetric inequalities for the second eigenvalue, via the Robin parameter. However, numerical work for the third eigenvalue 
points in the direction that the extremal sets change as soon as the Robin parameter becomes negative, with, in this case, the extremal domain becoming connected~\cite{AFK17}.

For more open problems in spectral shape optimization, we warmly recommend a book edited by Henrot \cite{H17}. For developments on
the related problem of maximizing eigenvalues over conformal classes on surfaces, see work of Karpukhin, Nadirashvili, Penskoi and
Polterovich~\cite{KNPP18,KNPP20}, Karpukhin and Stern~\cite{KS20}, and Petrides \cite{P18}.

\section{\bf The Petrides theorem on the degree of a map with reflection symmetry}

Our trial function construction for the third Neumann eigenvalue will depend on a topological theorem due to Petrides. His result, which is similar to the Borsuk--Ulam
theorem for odd mappings, says that if a map from the sphere to itself has a certain reflection symmetry property then its degree must be nonzero. 

This fact will play a role in the paper analogous to that of the Brouwer fixed point theorem (or no-retraction theorem) in Szeg\H{o} and Weinberger's papers for the
second Neumann eigenvalue. Namely, the topological result will be used in \autoref{pr:vanishing} to show existence of a trial function that is orthogonal to the first
two Neumann eigenfunctions. 

We will give a new proof for Petrides's theorem. His proof was local in nature, whereas the one below is global. 

Given a unit vector $p$ in Euclidean space, write
$R_p$ for the reflection with respect to the hyperplane through the origin that is perpendicular to $p$:
\[
R_p(y) = y - 2(y \cdot p) p .
\]
\begin{theorem}[Reflection symmetry implies nonzero degree; Petrides \protect{\cite[Claim 3]{P14}}] \label{th:degree}
Assume $\phi : S^m \to S^m$ is continuous, for some $m \geq 1$. If $\phi$ satisfies the reflection symmetry property 
\begin{equation} \label{eq:reflsymm}
\phi(-p) = R_p \big( \phi(p) \big) , \qquad p \in S^m ,
\end{equation}
then $\phi$ has nonzero degree, meaning $\phi$ is not homotopic to a constant map. More precisely, if $m$ is odd then $\deg(\phi)=1$, and if $m$ is even then $\deg(\phi)$ is odd.
\end{theorem}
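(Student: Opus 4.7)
The plan is to exploit the linearity of the reflection symmetry \eqref{eq:reflsymm} in $\phi$ to deform $\phi$, through other symmetric maps, to either the identity or its antipode, whose degrees can be read off directly.

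Both $\mathrm{id}$ and $-\mathrm{id}$ satisfy \eqref{eq:reflsymm}, since $R_p(\pm p) = \mp p$. Moreover, condition \eqref{eq:reflsymm} is linear in the map value: if two maps $S^m \to \R^{m+1}$ both satisfy it, then so does any convex combination. This motivates the normalized straight-line homotopy
\[
\phi_t(p) = \frac{(1-t)\phi(p) + tp}{|(1-t)\phi(p) + tp|},
\]
which is well-defined as long as the numerator never vanishes, i.e.\ as long as $\phi(p) \neq -p$ for every $p$ (the only candidate failure point being $t = 1/2$). In this ``no antipodal point'' case, $\phi_t$ is a homotopy through symmetric unit maps from $\phi$ to $\mathrm{id}$, yielding $\deg(\phi) = 1$. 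Similarly, if $\phi$ has no fixed point, interpolating with $-\mathrm{id}$ gives $\deg(\phi) = (-1)^{m+1}$, namely $1$ for $m$ odd and $-1$ for $m$ even.

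The main obstacle is the remaining case, when both the fixed set $Z' = \{p : \phi(p) = p\}$ and the antipodal set $Z = \{p : \phi(p) = -p\}$ are nonempty, so that neither straight-line interpolation is globally admissible. A direct use of \eqref{eq:reflsymm} shows $Z$ and $Z'$ are closed, antipodal-invariant, and disjoint. To handle this case I would first smoothly approximate $\phi$ within the linear space of symmetric maps (via a symmetrized mollifier), and then perform a local symmetric perturbation supported in a small antipodal-invariant tubular neighborhood of $Z$: construct the modification on one hemisphere and extend to the other via \eqref{eq:reflsymm}, keeping it small enough that, by the disjointness $Z \cap Z' = \emptyset$, no new fixed points are created. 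This reduces to the previous step.

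For $m$ odd the reduction yields $\deg(\phi) = 1$ exactly, since $(-1)^{m+1} = 1$ makes the two sub-cases agree. For $m$ even the two sub-cases give $\pm 1$, which are odd; to cover the possibility that $|\deg(\phi)| > 1$ in the even case I would finish with a mod-$2$ degree computation, using the antipodal invariance of $Z$ and $Z'$ together with the target reflection to pair up preimages of a generic regular value and force the parity of the signed preimage count to be odd.
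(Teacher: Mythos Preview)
Your opening observations are sound: $\pm\mathrm{id}$ both satisfy \eqref{eq:reflsymm}, the condition is preserved under normalized convex combinations, and the easy cases $Z=\emptyset$ or $Z'=\emptyset$ do give $\deg(\phi)=1$ or $(-1)^{m+1}$. The genuine gap is the perturbation step, which is the entire difficulty. You assert that a small symmetric perturbation supported near $Z=\{p:\phi(p)=-p\}$ can empty it, but this is topologically obstructed: $Z$ is precisely the fixed-point set of $-\phi$, whose Lefschetz number on $S^m$ equals $1+(-1)^m\deg(-\phi)=1-\deg(\phi)$, so whenever $\deg(\phi)\neq 1$, \emph{every} map homotopic to $\phi$ (symmetric or not) has $Z\neq\emptyset$, and no local surgery can change that. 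For $m$ even this kills the argument outright, since the theorem permits $\deg(\phi)\in\{-1,3,-3,\dots\}$. For $m$ odd the Lefschetz obstruction happens to vanish, but then ``$Z$ can be emptied by a homotopy'' is equivalent to ``$\phi$ is homotopic to $\mathrm{id}$'', which is exactly the conclusion you are after, so the reasoning is circular unless you actually produce the symmetric homotopy. Your closing mod-$2$ sketch does not rescue the even case either: the symmetry sends a preimage $p$ of a regular value $q$ to a preimage $-p$ of $R_p(q)$, a target that depends on $p$, so the preimages of a single $q$ are not antipodally paired the way they are for odd maps in Borsuk--Ulam.

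By contrast, the paper never tries to push $\phi$ toward $\pm\mathrm{id}$. It splits $S^m$ along the zero level of $\phi(p)\cdot p$ and builds a comparison map $\widetilde{\phi}$ equal to $\phi$ where $\phi(p)\cdot p\geq 0$ and to $\phi(-p)=R_p\phi(p)$ elsewhere; then $\widetilde{\phi}(p)\cdot p\geq 0$ everywhere, so $\deg\widetilde{\phi}=1$ by your own straight-line homotopy. Comparing the Jacobian integrals of $\phi$ and $\widetilde{\phi}$ region by region (using that the antipodal map has Jacobian $(-1)^{m+1}$) then yields $\deg\phi=1$ for $m$ odd and $\deg\phi+1=2\deg\alpha$ for $m$ even, where $\alpha$ is $\phi$ restricted to the closed superlevel set made into a closed manifold by antipodal gluing along its boundary.
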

First we need an elementary result about maps that are homotopic to the identity.
\begin{lemma} \label{le:homotopy}
Suppose $\psi : S^m \to S^m$ is continuous, where $m \geq 1$. If $\psi(p) \cdot p \geq 0$ for all $p \in S^m$ then $\deg(\psi)=1$. If $\psi(p) \cdot p \leq 0$ for all $p \in S^m$ then $\deg(\psi)=(-1)^{m+1}$. 
\end{lemma}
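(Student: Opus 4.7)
The plan is to show directly that $\psi$ is homotopic to the identity by using the straight-line interpolation $(1-t)p + t\psi(p)$ and radially projecting onto $S^m$. Homotopy invariance of degree then gives $\deg(\psi) = \deg(\mathrm{id}) = 1$. For the second statement, apply the first part to $-\psi$ and use the fact that the antipodal map on $S^m$ has degree $(-1)^{m+1}$.

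The one point that needs checking is that the interpolation avoids the origin. Suppose, toward a contradiction, that $(1-t)p + t\psi(p) = 0$ for some $p \in S^m$ and $t \in [0,1]$. The endpoint values $t=0$ and $t=1$ are ruled out because $p$ and $\psi(p)$ are unit vectors, so $t \in (0,1)$ and $\psi(p) = -\tfrac{1-t}{t}\, p$. Taking Euclidean norms forces $t = 1/2$, hence $\psi(p) = -p$, and therefore $\psi(p) \cdot p = -1 < 0$, contradicting the hypothesis $\psi(p) \cdot p \geq 0$. With this checked, the map
\[
H : S^m \times [0,1] \to S^m, \qquad H(p,t) = \frac{(1-t)p + t\psi(p)}{\bigl|(1-t)p + t\psi(p)\bigr|},
\]
is well-defined and continuous, with $H(\cdot,0) = \mathrm{id}$ and $H(\cdot,1) = \psi$. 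This establishes $\deg(\psi) = 1$.

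For the second statement, the hypothesis $\psi(p) \cdot p \leq 0$ translates into $(-\psi)(p) \cdot p \geq 0$, so the first part yields $\deg(-\psi) = 1$. Writing $-\psi = A \circ \psi$ with $A(y) = -y$ the antipodal map, multiplicativity of degree and the classical computation $\deg(A) = (-1)^{m+1}$ on $S^m$ give $1 = \deg(A)\deg(\psi) = (-1)^{m+1}\deg(\psi)$, hence $\deg(\psi) = (-1)^{m+1}$.

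There is no substantial obstacle here: the argument is essentially the standard observation that a self-map of the sphere with no antipodal point (i.e.\ $\psi(p) \neq -p$ for all $p$) is homotopic to the identity, specialized to the present sign hypothesis. The only minor care is handling the borderline case $\psi(p) \cdot p = 0$, which is absorbed by the normalization step above.
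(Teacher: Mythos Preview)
Your proof is correct and follows essentially the same approach as the paper: the straight-line homotopy normalized to the sphere, followed by applying the first case to $-\psi$. The only cosmetic difference is in verifying the interpolation avoids the origin---the paper simply notes that the dot product of the numerator with $p$ equals $(1-t)\psi(p)\cdot p + t$, which is positive for $t \in (0,1]$, whereas you argue by contradiction via norms; both are fine.
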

The intersection of the two cases, where $\psi(p) \cdot p = 0$ for all $p$ (``a hairy ball''), can obviously occur only for odd $m$, in which case the degree is $1$.
\begin{proof}
Suppose $\psi(p) \cdot p \geq 0$ for all $p$. Then $\psi$ is homotopic to the identity via
\[
\Psi(p,t) = \frac{(1-t)\psi(p)+tp}{|(1-t)\psi(p)+tp|} , \qquad p \in S^m , \quad t \in [0,1] ,
\]
where $\Psi(p,0)=\psi(p), \Psi(p,1)=p$, and the numerator is nonzero because its dot product with $p$ is $(1-t) \psi(p) \cdot p + t$, which is positive when $t \in (0,1]$. The identity map has degree $1$, and hence so does $\psi$.

If $\psi(p) \cdot p \leq 0$ for all $p$, then $\deg(-\psi)=1$ by the case just proved, and so $\deg(\psi)=(-1)^{m+1}$.
\end{proof}
\begin{proof}[Proof of \autoref{th:degree}]
The case of the circle ($m=1$) admits a simple winding number proof, and so we give that argument first. Regarding $p \in S^1$ as a complex number, the reflection formula becomes $R_p(z)= -p^2\overline{z}$ for complex numbers $z$. Thus the reflection symmetry hypothesis \eqref{eq:reflsymm} implies that  
\[
\phi(-p)\phi(p)=-p^2 \overline{\phi(p)} \phi(p)=-p^2 . 
\]
The argument of the right side increases by $4\pi$ as $p$ goes once around the circle. On the left side, the arguments of $\phi(p)$ and $\phi(-p)$ increase by the same amount as each other,
and hence must increase by $2\pi$, implying that $\phi$ has degree $1$. 

\smallskip
Now we prove the theorem for all $m \geq 1$. 

\smallskip \emph{Step 1 --- Reducing to a smooth $\phi$ whose normal component changes sign.} 
By an argument of Petrides \cite[p.\,2391]{P14} it is possible to reduce the theorem to the case of $\phi$ smooth. This involves using successive smoothing on $m+1$ almost-hemispheres centered on the coordinate axes, with the approximations extended to complementary hemispheres via the reflection
symmetry formula. So, from now on, $\phi$ is assumed to be smooth.

For $s\in[0,1)$ define the level sets 
\[
A(s) = \{ p \in S^m : \phi(p) \cdot p > s \} , \qquad B(s) = \{ p \in S^m : \phi(p) \cdot p < s \}
\]
and  
\[
Z(s) = \{ p \in S^m : \phi(p) \cdot p = s \} .
\]
These sets are invariant under the antipodal map, meaning $A(s) = -A(s)$, $B(s) = -B(s)$ and $Z(s) = -Z(s)$, because the normal component is even:
\[
\phi(p) \cdot p = R_p \left(\phi(p)\right) \cdot R_p\left( p\right) = \phi(-p) \cdot (-p) 
\]
by the reflection symmetry hypothesis. 
We may further suppose $\phi(p) \cdot p$ is nonconstant, for otherwise \autoref{th:degree} follows immediately from \autoref{le:homotopy}.

\smallskip \emph{Step 2 --- Reducing $Z$ to a smooth submanifold.} Choose a regular value $s \in [0,1)$ for the function $\phi(p) \cdot p$,
so that the level set $Z(s)$ is an imbedded submanifold in the sphere that forms the boundary of both the
superlevel set $A(s)$ and the sublevel set $B(s)$. Define a homotopy 
\[
\Phi(p,t) = \frac{\phi(p)-tp}{|\phi(p)-tp|} , \qquad p \in S^m , \quad t \in [0,s] ,
\]
and note that $\Phi(p,0)=\phi(p)$ and the numerator is nonzero for each $t$ because $\phi(p)$ is a unit vector while $|tp|=t \leq s < 1$. Denote the right endpoint map
of the homotopy by $\psi(p)=\Phi(p,s)$, so that $\psi$ is a smooth map of the sphere to itself with $\deg(\psi)=\deg(\phi)$. Observe also that $\psi$ satisfies the
reflection symmetry condition \eqref{eq:reflsymm}, because both $\phi(p)$ and $p$ satisfy it. Thus it suffices to prove the theorem for $\psi$.

The zero superlevel set for $\psi$ is 
\[
A_\psi(0) = \{ p \in S^m : \psi(p) \cdot p > 0 \} = \{ p \in S^m : \phi(p) \cdot p > s \} = A(s) ,
\]
and similarly for its sublevel and zero sets $B_\psi(0) = B(s)$ and $Z_\psi(0)=Z(s)$, respectively. Thus, by changing the name of $\psi$ to $\phi$ and dropping the $\psi$ index, we may suppose from now on that $\phi$ is smooth and $Z=Z(0)$ is an imbedded submanifold in the sphere that forms the boundary of both $A=A(0)$ and $B=B(0)$. That is, $Z$ is locally the graph of a smooth function with $A$ on one side of the graph and $B$ on the other side. 

\smallskip \emph{Step 3 --- Comparison map.} The degree of $\phi$ will be compared with the degree of the map $\widetilde{\phi} : S^m \to S^m$ defined by 
\[
\widetilde{\phi}(p) 
= 
\begin{cases}
\phi(p) \\
\phi(-p) 
\end{cases}
\hspace{-12pt} = 
\begin{cases}
\phi(p) & \text{when $p \in A \cup Z$,} \\
R_p \, \phi(p) & \text{when $p \in B \cup Z$,}
\end{cases}
\]
where the reflection symmetry assumption \eqref{eq:reflsymm} has been used on $B \cup Z$; notice the definition is consistent on $Z$ since $\phi(p) \cdot p = 0$ there and so $\phi(p)=R_p\, \phi(p)$ on $Z$. 

This map $\widetilde{\phi}$ is piecewise smooth, and $\deg(\widetilde{\phi})=1$ by \autoref{le:homotopy}, because $\widetilde{\phi}(p) \cdot p \geq 0$ for all $p \in S^m$. 

\smallskip \emph{Step 4 --- Odd dimensions.} Suppose $m$ is odd. Write $\Jac[\phi]$ for the determinant of the derivative map of $\phi$, so that  
\begin{align}
\deg(\phi ) 
& = \fint_{S^m} \Jac[\phi](p) \, dS(p) \notag \\
& = \frac{1}{|S^m|} \int_A \Jac[\phi](p) \, dS(p) + \frac{1}{|S^m|} \int_B \Jac[\phi](p) \, dS(p) , \label{eq:magic}
\end{align}
where we need not integrate over $Z$ because that set has measure zero. In the integral over $A$, we may replace $\phi$ with $\widetilde{\phi}$. In the integral over $B$, write $N(p)=-p$ for the antipodal map, and change variable to obtain that  
\begin{align*}
\int_B \Jac[\phi](p) \, dS(p)
& = \int_{N^{-1}(B)} (\Jac[\phi] \circ N)(p) \, dS(p)  \\
& = \int_{N^{-1}(B)} \Jac[\phi \circ N](p) \, dS(p) 
\end{align*}
by the chain rule, because $\Jac[N]=(-1)^{m+1}=1$ when $m$ is odd. Next, since $N^{-1}(B)=-B=B$, and $\phi \circ N = \widetilde{\phi}$ on $B$, we conclude
\[
\int_B \Jac[\phi](p) \, dS(p) = \int_B \Jac[\widetilde{\phi}](p) \, dS(p) .
\]
Thus we may replace $\phi$ with $\widetilde{\phi}$ in the second integral of \eqref{eq:magic} also, and so $\deg(\phi)=\deg(\widetilde{\phi})=1$.

\smallskip \emph{Step 5 --- Even dimensions.} Suppose $m$ is even. By arguing as in the previous step, except this time using that $\Jac[N]=(-1)^{m+1}=-1$ because $m$ is even, we find
\begin{equation} \label{eq:degdecomp}
\deg(\phi) 
= \frac{1}{|S^m|} \int_A \Jac[\phi](p) \, dS(p) - \frac{1}{|S^m|} \int_B \Jac[\widetilde{\phi}](p) \, dS(p) .
\end{equation}
Obviously also 
\[
\deg(\widetilde{\phi}) 
= \frac{1}{|S^m|} \int_A \Jac[\phi](p) \, dS(p) + \frac{1}{|S^m|} \int_B \Jac[\widetilde{\phi}](p) \, dS(p) .
\]
Since $\deg(\widetilde{\phi}) = 1$, adding the last two equations shows that 
\[
\deg(\phi) + 1 = 2 \, \frac{1}{|S^m|} \int_A \Jac[\phi](p) \, dS(p) . 
\]
Thus to show $\phi$ has odd degree, we want to show the expression 
\begin{equation} \label{eq:alphadegree}
\frac{1}{|S^m|} \int_A \Jac[\phi](p) \, dS(p)
\end{equation}
is an integer. We accomplish this by showing it equals the degree of a mapping. 

We regard the closure $\overline{A} = A \cup Z$ as a compact manifold without boundary, by identifying antipodal points in $Z$. In more detail, given a point $p \in Z$ and any small $\e>0$, one can form a neighborhood of $p$ in $\overline{A}$ by gluing together the sets 
\[
\{ q \in A : |q-p| < \e \} \quad \text{and} \quad \{ q \in A : |q-(-p)| < \e \} 
\]
along their common interface, which is the smooth submanifold 
\[
\{ q \in Z : |q-p| < \e \} \stackrel{N}{\sim} \{ q \in Z : |q-(-p)| < \e \} ,
\]
where $\stackrel{N}{\sim}$ denotes the antipodal equivalence relation on $Z$. A little thought shows that the resulting manifold $\overline{A}$ is orientable, because $m$ is even. This manifold might not be connected, but it can have only finitely many components, since $Z$ is a smooth submanifold of the sphere. 

Define a piecewise smooth map $\alpha : \overline{A} \to S^m$ by restriction: let $\alpha = \phi \,|_{\, \overline{A}}$, noting the restriction is consistent under the antipodal identification on $Z$ because 
\[
p \in Z \quad \Longrightarrow \quad \phi(p) \cdot p = 0 \quad \Longrightarrow \quad \phi(p) = R_p \, \phi(p) \quad \Longrightarrow \quad \phi(p) = \phi(-p) ,
\]
by the reflection symmetry hypothesis. The degree of the map $\alpha$ is exactly the expression in \eqref{eq:alphadegree} (by the de Rham approach to degree theory \cite[Corollary III.2.4]{OR09}), which means expression \eqref{eq:alphadegree} must be an integer, completing the proof. 

\smallskip
\noindent \emph{Remark.} A more ``symmetrical'' formulation of Step 5, when $m$ is even, arises from regarding both $\overline{A}$ and $\overline{B}$ as compact orientable manifolds without boundary, via the antipodal identification on $Z$. Define $\beta : \overline{B} \to S^m$ by restriction as $\beta = \phi \,|_{\, \overline{B}}$, so that $\deg(\phi) = \deg(\alpha) - \deg(\beta \circ N)$ by formula \eqref{eq:degdecomp}, where $\alpha : \overline{A} \to S^m$ and $\beta \circ N : \overline{B} \to S^m$. Since also $1=\deg(\widetilde{\phi}) = \deg(\alpha)+\deg(\beta \circ N)$ by definition of $\widetilde{\phi}$, adding the two equations shows that $\deg(\phi)+1=2\deg(\alpha)$, and hence $\deg(\phi)$ is odd.

\smallskip
\noindent \emph{Remark.} 
The nonvanishing of $\deg(\phi)$ can be proved in a different way when $m$ is odd. For suppose to the contrary that $\phi$ is homotopic to a constant map, say to $p \mapsto e_1$ where $e_1$ is the unit vector in the first coordinate direction. Then $\phi(-p)$ is also homotopic to the constant map, while $R_p \big( \phi(p) \big)$ is homotopic to the map $p \mapsto R_p(e_1)$. After joining these homotopies by the reflection symmetry hypothesis \eqref{eq:reflsymm}, we conclude that $p \mapsto R_p(e_1)$ is homotopic to a constant. But $R_p(e_1)$ has nonzero degree when $m$ is odd, by an observation of Girouard, Nadirashvili and Polterovich \cite[p.\,656]{GNP09}. This contradiction shows that $\phi$ has nonzero degree. 
\end{proof}

\section{\bf An elementary lemma related to trial functions}

The following lemma is needed in both the Euclidean and hyperbolic parts of the paper, when trial functions of the form $g(r)x_j/r$ are employed. The idea is due to Weinberger \cite{W56}. Write $|x|=r$.
\begin{lemma}\label{le:trialfn}
Suppose $w^*$ and $w_*$ are Borel measures on $\Rn, n \geq 2$, and $g(r)$ is $C^1$-smooth for $r \geq 0$, with $g(0)=0$. If $\mu \in \R$ satisfies 
\[
\mu \leq \frac{\int_\Rn |\nabla \big( g(r)x_j/r \big)|^2 \, dw^*(x)}{\int_\Rn \big( g(r)x_j/r \big)^2 \, dw_*(x)}
\]
for $j=1,\dots,n$, and each integral is positive and finite, then 
\[
\mu \leq \frac{\int_\Rn \left( g^\prime(r)^2 + \frac{n-1}{r^2} g(r)^2 \right) dw^*(x)}{\int_\Rn g(r)^2 \, dw_*(x)} .
\]
\end{lemma}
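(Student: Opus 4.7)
The plan is to sum the hypothesized inequality over the coordinate directions $j = 1, \dots, n$ and exploit the fact that the radial vector field $x/r$ has unit length, so that the angular parts of $g(r)x_j/r$ combine in a clean way.

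Write $N_j = \int_{\mathbb{R}^n} |\nabla(g(r)x_j/r)|^2 \, dw^*(x)$ and $D_j = \int_{\mathbb{R}^n} (g(r)x_j/r)^2 \, dw_*(x)$. The hypothesis rearranges to $\mu D_j \leq N_j$ for each $j$, and since the denominators $D_j$ are positive, summing gives $\mu \sum_{j=1}^n D_j \leq \sum_{j=1}^n N_j$. The argument then reduces to two pointwise identities.

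For the denominator, $\sum_j (g(r)x_j/r)^2 = g(r)^2 \sum_j x_j^2/r^2 = g(r)^2$, hence $\sum_j D_j = \int g(r)^2 \, dw_*(x)$. For the numerator I would set $h(r)=g(r)/r$ (extended continuously at $r=0$ using $g(0)=0$ and $g \in C^1$) and compute $\nabla(h(r)x_j) = h'(r)(x_j/r)\,\hat{r} + h(r)\,e_j$, where $\hat{r}=x/r$. Squaring gives $|\nabla(h(r)x_j)|^2 = h'(r)^2 x_j^2 + 2h'(r)h(r)x_j^2/r + h(r)^2$, and summing over $j$ (using $\sum x_j^2=r^2$) yields
\[
\sum_{j=1}^n |\nabla(g(r)x_j/r)|^2 = h'(r)^2 r^2 + 2h'(r)h(r)r + n\, h(r)^2 .
\]
A direct substitution of $h'(r) = g'(r)/r - g(r)/r^2$ collapses the cross terms and produces $g'(r)^2 + (n-1)g(r)^2/r^2$. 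Thus $\sum_j N_j = \int \big( g'(r)^2 + (n-1)g(r)^2/r^2 \big) dw^*(x)$, and dividing by $\sum_j D_j$ yields the claimed inequality.

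I do not expect any serious obstacle: the computation is essentially Weinberger's classical identity, and the only mildly delicate point is ensuring the integrand $g(r)^2/r^2$ causes no trouble near the origin, which is handled by $g \in C^1$ with $g(0)=0$ (so $g(r)/r$ extends continuously). The hypothesis that each $N_j, D_j$ is finite and positive lets us sum the ratio inequalities termwise without worrying about indeterminate forms.
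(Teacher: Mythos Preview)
Your proof is correct and follows exactly the paper's approach: clear denominators, sum over $j$, and use the pointwise identities $\sum_j (x_j/r)^2 = 1$ and $\sum_j |\nabla(g(r)x_j/r)|^2 = g'(r)^2 + (n-1)g(r)^2/r^2$, the latter being what the paper calls a ``straightforward'' gradient computation that you have spelled out in detail. (One harmless slip: your intermediate gradient should read $h'(r)\,x_j\,\hat{r} + h(r)\,e_j$, without the extra $1/r$, but the squared expression you write on the next line is correct and the rest of the argument goes through unchanged.)
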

\begin{proof}
Clearing the denominator and summing over $j$ gives that 
\[
\mu \int_\Rn g(r)^2 \sum_{j=1}^n \big( x_j/r \big)^2 \, dw_*(x) \leq \int_\Rn \sum_{j=1}^n |\nabla \big( g(r)x_j/r \big)|^2 \, dw^*(x) .
\]
The sum on the left simplifies to $1$, giving the desired denominator. The gradient on the right can be computed straightforwardly, obtaining that
\begin{equation} \label{eq:threestar}
\sum_{j=1}^n |\nabla \big( g(r)x_j/r \big)|^2 = g^\prime(r)^2 + \frac{n-1}{r^2} g(r)^2 ,
\end{equation}
which is what we want for the numerator. 
\end{proof}

\section{\bf Weighted mass transplantation}  \label{sec:transplantation}	

A minor adaptation of Weinberger's mass transplantation method will be needed later. Write 
\[
V_\omega(\Omega) = \int_\Omega \omega(r) \, dx
\]
for the volume of $\Omega$ with respect to a radial weight $\omega(r)$.
\begin{lemma}[Mass transplantation]\label{transplantation}
Suppose $\omega(r)$ is a positive, Lebesgue measurable, radial weight function on $\Rn$, and $\Omega_L$ and $\Omega_U$ are measurable sets in $\Rn$ with finite weighted volume adding up to twice the weighted volume of a ball $B$ centered at the origin: 
\begin{equation} \label{eq:volumesum}
V_\omega(\Omega_L)+V_\omega(\Omega_U)=2V_\omega(B) < \infty .
\end{equation}
If $h : [0,\infty) \to \R$ is decreasing then 
\begin{equation} 
\label{eq:monot}
\big( \int_{\Omega_L} + \int_{\Omega_U} \big) h(r) \omega(r) \, dx \leq 2 \int_B h(r) \omega(r) \, dx .
\end{equation}
If $h(r)$ is strictly decreasing then equality holds if and only if $\Omega_L = \Omega_U = B$ up to sets of measure zero. 
\end{lemma}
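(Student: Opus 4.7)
The plan is to treat this as a standard ``bathtub principle'' argument, using the weighted measure $d\mu := \omega(r)\,dx$ and comparing each of $\Omega_L, \Omega_U$ separately to the centered ball $B$. The only property of $h$ that is used is its radial monotonicity, through the pivotal value $h(r_B)$ where $r_B$ denotes the radius of $B$.

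The first step is to write each contribution as a deviation from the integral over $B$. For $* \in \{L, U\}$, splitting along $B \cap \Omega_*$ gives the identity
\[
\int_{\Omega_*} h(r)\omega(r)\,dx - \int_B h(r)\omega(r)\,dx = \int_{\Omega_* \setminus B} h\,d\mu - \int_{B \setminus \Omega_*} h\,d\mu.
\]
On $\Omega_* \setminus B$ one has $r \geq r_B$ and hence $h(r) \leq h(r_B)$, while on $B \setminus \Omega_*$ one has $r \leq r_B$ and so $h(r) \geq h(r_B)$. Inserting these bounds and rearranging yields
\[
\int_{\Omega_*} h\,\omega\,dx - \int_B h\,\omega\,dx \leq h(r_B)\bigl[\mu(\Omega_* \setminus B) - \mu(B \setminus \Omega_*)\bigr] = h(r_B)\bigl[V_\omega(\Omega_*) - V_\omega(B)\bigr].
\]
Summing this estimate over $* = L, U$ and invoking the volume identity \eqref{eq:volumesum} makes the right side vanish, producing \eqref{eq:monot}.

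For the equality statement, I would argue by tracking where strictness enters. If $h$ is strictly decreasing and $\omega$ is positive, then $h(r) < h(r_B)$ on a set of positive $\mu$-measure inside $\Omega_* \setminus B$ whenever $\mu(\Omega_* \setminus B) > 0$, so the first bound above is strict; analogously, the second bound is strict whenever $\mu(B \setminus \Omega_*) > 0$. Hence the $*$-estimate is strict as soon as $\Omega_* \neq B$ up to $\mu$-measure zero (equivalently, up to Lebesgue measure zero, since $\omega > 0$). The two estimates for $* = L, U$ are inequalities in the same direction, so their sum is strict if even one of them is, and therefore equality in \eqref{eq:monot} forces $\Omega_L = \Omega_U = B$ up to measure zero.

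The argument is essentially bookkeeping once the radial monotonicity of $h$ is combined with the volume constraint, so there is no genuine obstacle. The only point requiring a modicum of care is the equality analysis: one must check that strictness in the two individual comparisons cannot cancel when summed. This is immediate here because both individual inequalities point the same way, so no cancellation is possible, and the conclusion follows directly.
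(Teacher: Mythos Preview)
Your proof is correct and follows essentially the same approach as the paper: both split each set into its intersection with $B$ and its complement, bound $h$ by the pivotal value $h(r_B)$ on each piece, and use the volume constraint to make the residual vanish. The only cosmetic difference is that you bound each $\Omega_*$ against $B$ separately (obtaining a right side $h(r_B)[V_\omega(\Omega_*) - V_\omega(B)]$ that need not be nonpositive on its own) and then sum, whereas the paper keeps the two sets together throughout a single chain of inequalities; the equality analysis is identical.
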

\begin{proof}
The volume hypothesis \eqref{eq:volumesum} implies that 
\begin{equation} \label{eq:totalvolume}
V_\omega(\Omega_L \setminus B) + V_\omega(\Omega_U \setminus B) = V_\omega(B \setminus \Omega_L) + V_\omega(B \setminus \Omega_U) .
\end{equation}
Since $h$ is radially decreasing, we find
\begin{align}
& \big( \int_{\Omega_L} + \int_{\Omega_U} \big)  h(r) \omega(r) \, dx \notag \\
& = \big( \int_{\Omega_L \cap B} + \int_{\Omega_U \cap B} \big) h(r) \omega(r) \, dx + \big( \int_{\Omega_L \setminus B} + \int_{\Omega_U \setminus B} \big)  h(r) \omega(r) \, dx \notag \\
& \leq \big( \int_{\Omega_L \cap B} + \int_{\Omega_U \cap B} \big) h(r) \omega(r) \, dx + \big( V_\omega(\Omega_L \setminus B) + V_\omega(\Omega_U \setminus B) \big) h(1) \label{massproof1} \\
& = \big( \int_{B \cap \Omega_L} + \int_{B \cap \Omega_U} \big) h(r) \omega(r) \, dx + \big( V_\omega(B \setminus \Omega_L) + V_\omega(B \setminus \Omega_U) \big) h(1) \qquad \text{by \eqref{eq:totalvolume}} \notag \\
& \leq \big( \int_{B \cap \Omega_L} + \int_{B \cap \Omega_U} \big) h(r) \omega(r) \, dx + \big( \int_{B \setminus \Omega_L} + \int_{B \setminus \Omega_U} \big) h(r) \omega(r) \, dx \label{massproof2} \\
& = 2 \int_B h(r) \omega(r) \, dx < \infty , \notag
\end{align}
which proves the inequality \eqref{eq:monot} in the proposition. 

Suppose $h$ is strictly decreasing and equality holds in \eqref{eq:monot}. Then equality must hold in \eqref{massproof1}, which forces the sets $\Omega_L \setminus B$ and $\Omega_U \setminus B$ to have weighted volume zero (using here that $h$ is strictly decreasing), and hence to have measure zero, since $\omega>0$. Similarly, equality must hold in \eqref{massproof2}, and so $B \setminus \Omega_L$ and $B \setminus \Omega_U$ have measure zero. 
\end{proof}
%
%
%

\section{\bf Proof of \autoref{th:Euclidean2ball} (Euclidean) --- constructing the trial functions}  \label{sec:Euclidean2ballproof}	

The idea of constructing trial functions from the eigenfunctions of the ball goes back to Szeg\H{o} \cite{S54} and Weinberger \cite{W56}, in their work on the second eigenvalue. Girouard,
Nadirashvili and Polterovich \cite{GNP09} folded these trial functions across a hyperplane in order to obtain trial functions for the third eigenvalue. (Technically, they folded the domain
rather than the trial functions, following an idea of Nadirashvili \cite{N02} on the sphere.) They worked in the disk in $2$ dimensions with hyperbolic reflections, but the construction adapts
to Euclidean space too, as developed below. Bucur and Henrot \cite{BH19} constructed the same family of trial functions by gluing rather than folding, and they parameterized the trial functions
somewhat differently. These matters are discussed in more detail at the end of this section.

\subsection*{Construction of trial functions} 
By scale invariance we may assume
\[
\vm{\Omega} = 2\vm{\B} ,
\]
so that $\Omega$ has twice the volume of the unit ball. 

Let $g(r)$ be the radial part of the second Neumann eigenfunction for the unit ball, extended to be constant for $r \geq 1$. That is, $g(0)=0$ and 
\[
g(r) = 
\begin{cases}
r^{1-n/2}J_{n/2}(k_{n/2}^\prime r) , & 0 < r \leq 1 , \\
g(1) , & r > 1 ,
\end{cases}
\]
where $k_{n/2}^\prime$ is the first positive zero of $(r^{1-n/2}J_{n/2}(r))^\prime$ and $J_{n/2}$ is the Bessel function of order $n/2$. The eigenvalue of the ball is $\mu_2(\B) = (k_{n/2}^\prime)^2$. For example, in dimension $n=2$ one gets $k_1^\prime \simeq 1.84$. Notice $g^\prime=0$ at $r=1$, from both the left and right. 

What we need to know about the ball eigenfunction $g(r)y_j/r$ (where $r=|y|$ and $j=1,\ldots,n$) is that $g$ is continuous and increasing for $r \geq 0$, and positive for $r > 0$, and $g^\prime$ is continuous with $g^\prime(1)=0$. \autoref{basic2Euclidean} at the end of the paper provides a precise statement and references for these properties. 

Define $v : \Rn \to \Rn$ to be the vector field
\begin{equation} \label{eq:vdef}
v(y) = g(|y|) \frac{y}{|y|} , \qquad y \in \Rn \setminus \{ 0 \} ,
\end{equation}
with $v(0)=0$. Note $v$ is continuous everywhere, including at the origin since $g(0)=0$. 

Each component $v_j(y)=g(r)y_j/r$ of the vector field is a Neumann eigenfunction on the unit ball. The eigenfunction equation $-\Delta v_j = \mu_2(\B)v_j$ implies that $g$ satisfies the Bessel-type equation
\begin{equation}\label{eq:besseltype}
- g''(r) = \frac{n-1}{r}g'(r) + \left( \mu_2(\B) - \frac{n-1}{r^2} \right) g(r) .
\end{equation}

A useful normalization is that by translating $\Omega$ we may suppose the ``Weinberger point'' (or $g$-center of mass) of $\Omega$ lies at the origin:
\begin{equation} \label{eq:centroid}
\int_\Omega v(y) \, dy = 0 .
\end{equation}
The existence of such a translation was proved by Weinberger \cite{W56}, using Brouwer's fixed point theorem, although note the set-up is slightly different here because our $\Omega$ has twice the volume of the ball to which $g$ is adapted. Incidentally, an existence proof by energy minimization was explored recently by
Laugesen~\cite[Corollary 2]{L20a} (take $f \equiv 1$ there), where references to earlier work can be found.

The trial functions will be obtained by folding or reflecting $v$ across hyperplanes, thus constructing functions that are even with respect to the hyperplane. Let 
\[
H = H_{p,t} = \{ y \in \Rn :  y \cdot p < t \} , \qquad p \in S^{n-1}, \quad t \geq 0 ,
\]
to be the open halfspace with normal vector $p$ and ``height'' $t$, and let
\[
R_H(y) \equiv R_{p,t}(y) = y + 2(t - y \cdot p) p
\]
be reflection in the hyperplane $\partial H$. Define the ``fold map'' onto the closed halfspace $\overline{H}$ by 
\[
F_H(y) \equiv F_{p,t}(y) = 
\begin{cases}
y & \text{if\ } y \in H, \\
R_H(y) & \text{if\ } y \in \Rn \setminus H .
\end{cases}
\]
We will use the notation $R_H$ and $F_{H}$ whenever this is unambiguous, and $R_{p,t}$ and $F_{p,t}$ whenever it is necessary to refer to any of the arguments explicitly.

Our trial functions on $\Omega$ will be the $n$ components of the vector field
\[
y \mapsto v(F_H(y)-c) ,
\]
where $c \in \Rn$. To visualize these trial functions, imagine centering the vector field at $c$ and then replace its values in the complement of $H$ with the 
even extension of the vector field across $\partial H$. The resulting vector field belongs to the Sobolev space $H^1(\Omega;\Rn)$, since it is continuously 
differentiable on each side of the hyperplane $\partial H$ and is continuous across the hyperplane. 

\smallskip \noindent \emph{Remark.}
The number of parameters matches the number of conditions to be satisfied, because the parameters $(p,t,c)$ lie in the $2n$-dimensional space
$S^{n-1} \times \R \times \Rn$, and we aim to satisfy $2n$ orthogonality conditions, namely we want each of the $n$ trial functions to be orthogonal to the
first and second eigenfunctions on $\Omega$. Thus the set-up considered is dimensionally compatible.

The different possibilities for the trial functions are illustrated in Figure~\ref{fig:trialfns}, for the special case in $2$ dimensions where the hyperplane is the vertical axis and $H$ is the left halfspace. 

\begin{figure}
\begin{center}
\includegraphics[scale=0.55]{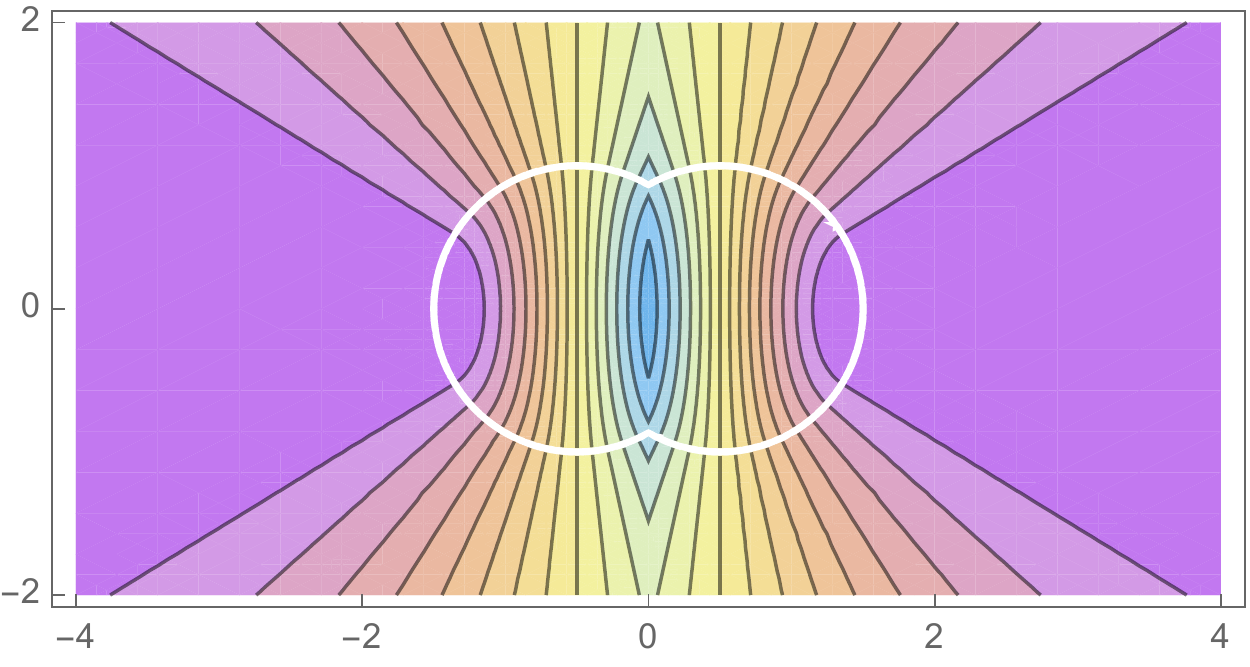} \quad \includegraphics[scale=0.55]{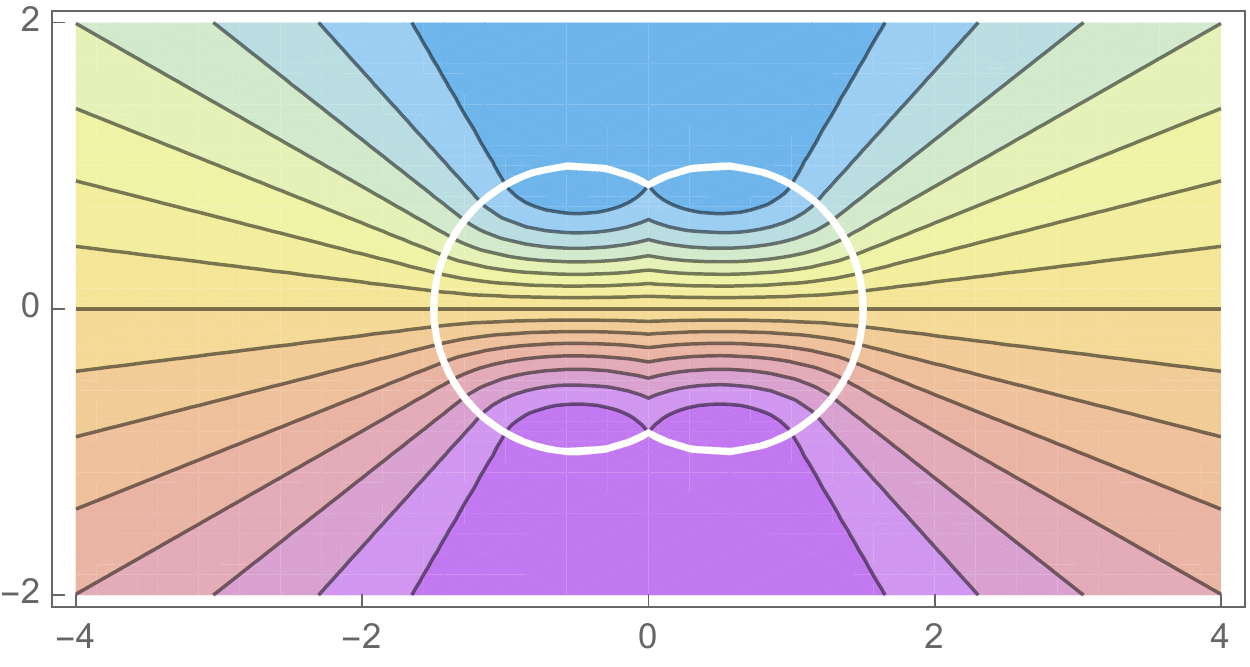} \\
\includegraphics[scale=0.55]{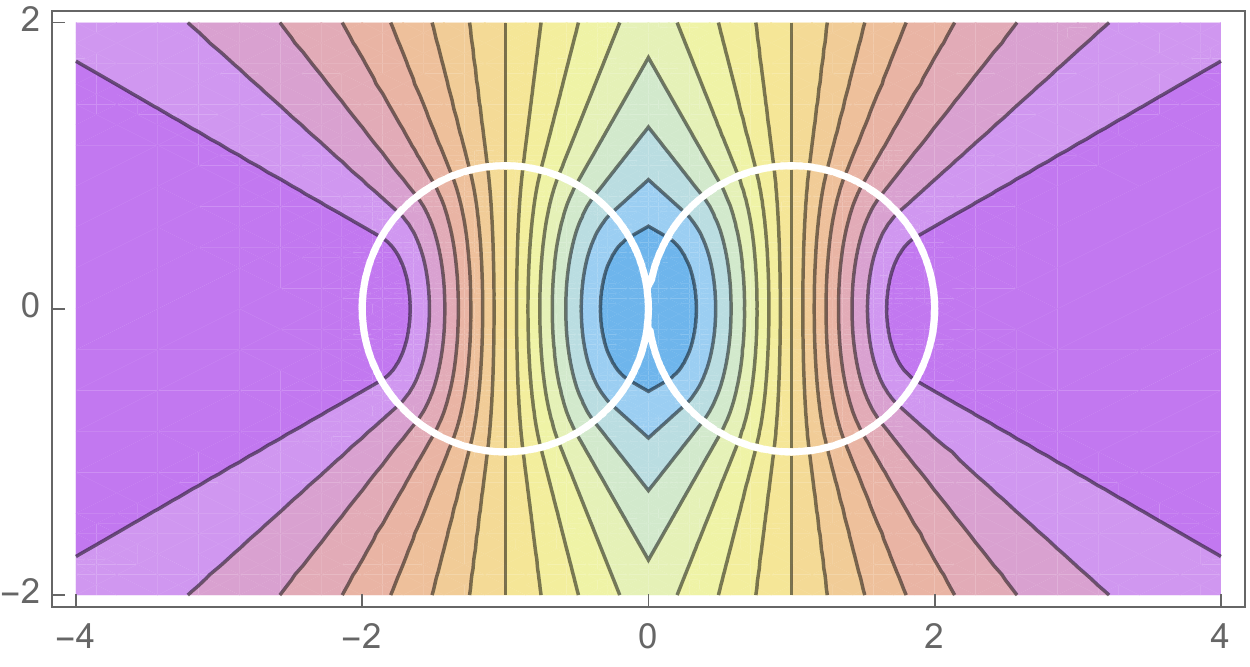} \quad \includegraphics[scale=0.55]{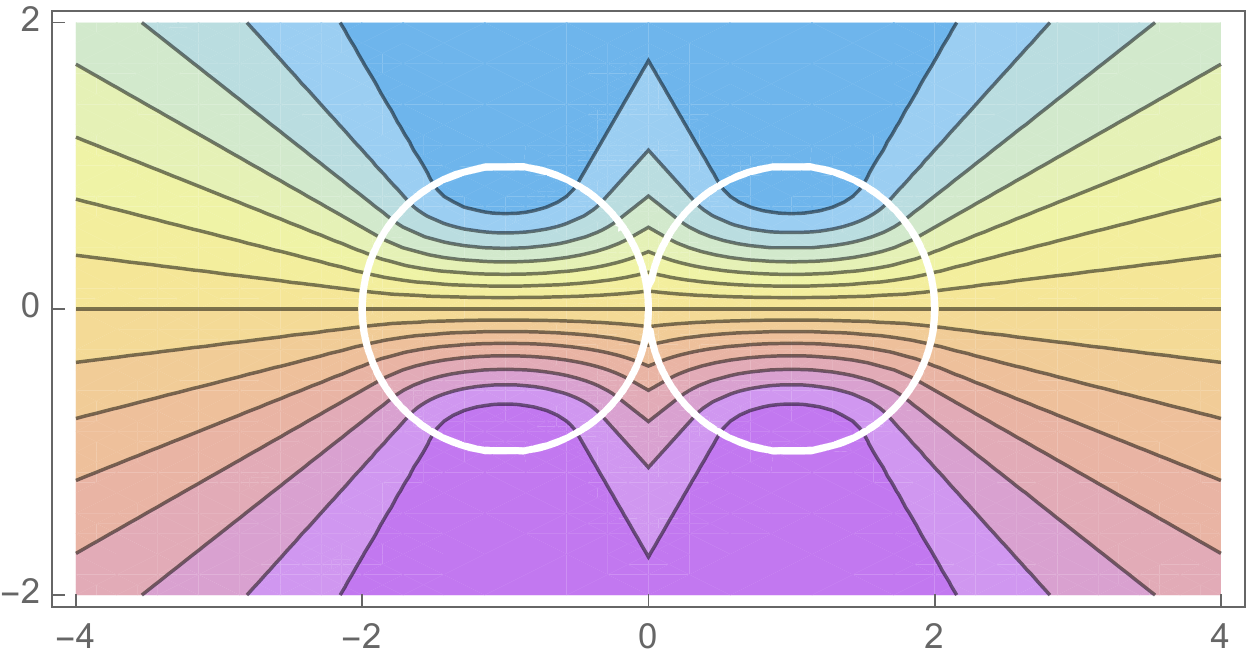} \\
\includegraphics[scale=0.55]{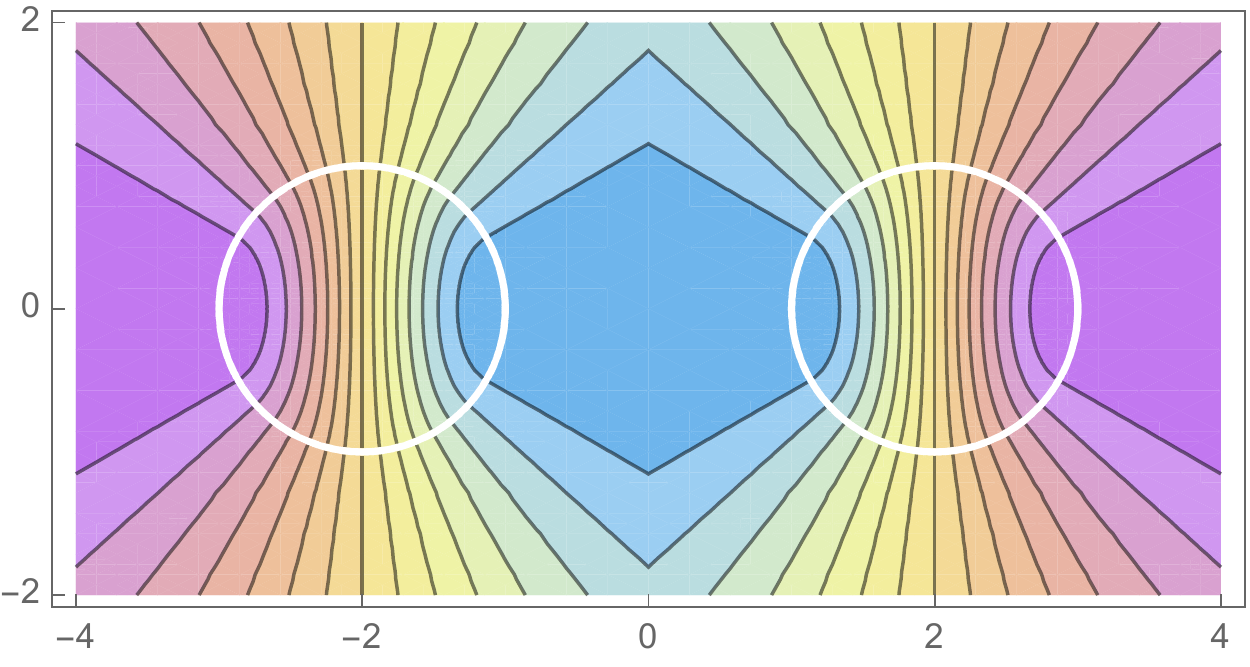} \quad \includegraphics[scale=0.55]{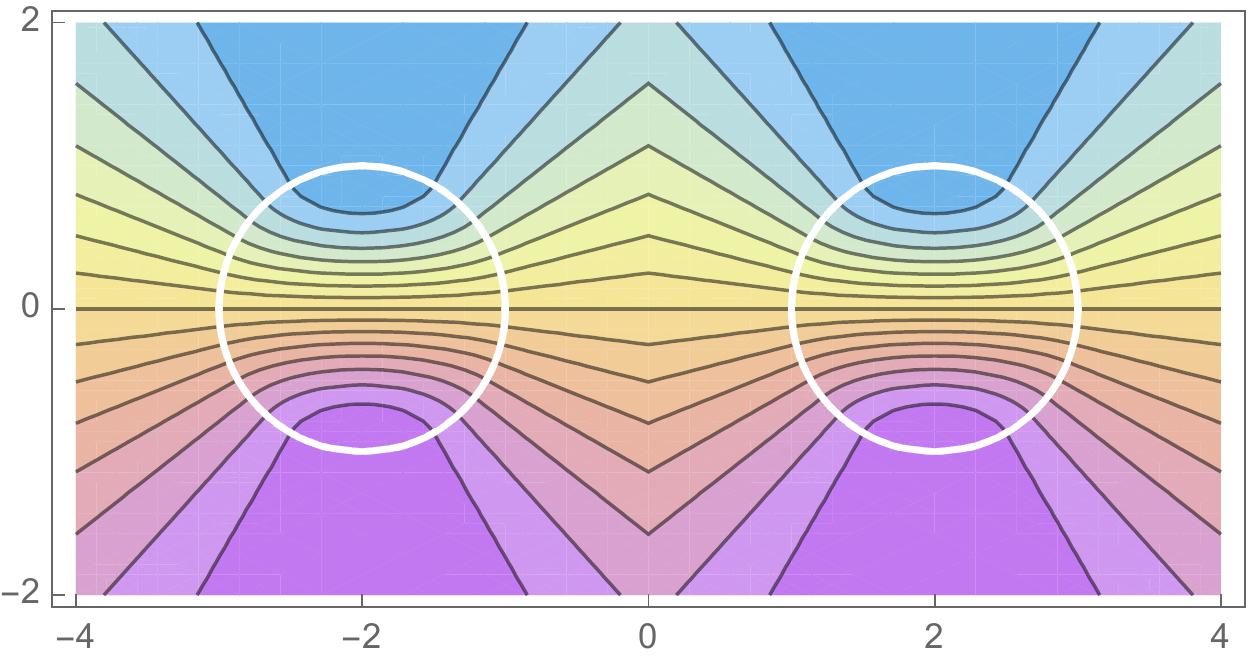}
\caption{\label{fig:trialfns} Trial function contour plots when $H$ is the left halfplane (meaning $p=(1,0),t=0$). Centering points where the trial functions vanish:
top $c=(-1/2,0)$, middle $c=(-1,0)$, bottom $c=(-2,0)$. Left side: cosine mode, from first component of the trial vector field $v(F_H(y)-c)$. Right side: sine mode,
from second component of the trial vector field. The white circles sit at radius $1$ from the point $c$ and its reflected point. All trial functions are even with respect to $H$.}
\end{center}
\end{figure}

\subsection*{Orthogonality of trial functions to the constant} We claim that there exists a unique point $c_H = c_{p,t} \in \Rn$ such that each component of the vector field
\[
v_H(y) = v(F_H(y)-c_H)
\]
is orthogonal to the constant function (the Neumann ground state) on $\Omega$, meaning
\begin{equation} \label{eq:orthog}
\int_\Omega v_H(y) \, dy  = \int_\Omega v \big( F_H(y) - c_H \big) \, dy = 0 ,
\end{equation}
and that this unique point $c_H$ depends continuously on the parameters $(p,t)$ of the halfspace $H$. These claims follow from \cite[Corollary 3]{L20a} (with $f \equiv 1$ there); in the conclusion of that result, take $c_H=-x_H$, and notice the hypotheses of the corollary are satisfied because $g$ is increasing with $g(r)>0$ when $r>0$. Incidentally, results in \cite{L20a} can also handle unbounded sets of finite volume, if desired. 

We call $c_H$ the ``center of mass point'' corresponding to the halfspace $H$ and, as in the case of the fold map defined above, will use the notation $c_{H}$ or $c_{p,t}$ depending on the context.
\begin{lemma}[Location of the center of mass] \label{le:whereiscH}
The point $c_H$ defined by condition \eqref{eq:orthog} lies in the halfspace $H$.
\end{lemma}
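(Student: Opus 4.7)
The plan is to test the defining vector identity $\int_\Omega v(F_H(y)-c_H)\,dy = 0$ against the unit normal $p$ of $\partial H$. Because $v(z)=g(|z|)z/|z|$ and $g$ is strictly positive on $(0,\infty)$ with $g(0)=0$, the dot product
\[
v(z)\cdot p \;=\; \frac{g(|z|)}{|z|}\, z\cdot p
\]
has the same sign as $z\cdot p$ whenever $z\neq 0$. Thus the question of where $c_H$ lies reduces to a sign analysis of $(F_H(y)-c_H)\cdot p$ on $\Omega$.

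First I would record the purely geometric fact that the fold map takes values in the closed halfspace $\overline H$: for every $y\in\Rn$,
\[
F_H(y)\cdot p \leq t,
\]
with strict inequality unless $y\in\partial H$. This is immediate from the definition: on $H$ the map is the identity and $y\cdot p<t$, while on $\Rn\setminus H$ one has $F_H(y)\cdot p = 2t - y\cdot p$, which is strictly less than $t$ whenever $y\cdot p>t$. Since $\Omega\cap\partial H$ has Lebesgue measure zero, we conclude that $F_H(y)\cdot p < t$ for almost every $y\in\Omega$.

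Then I would argue by contradiction. Suppose $c_H\notin H$, i.e.\ $c_H\cdot p\geq t$. Combining with the previous bound gives
\[
(F_H(y)-c_H)\cdot p \;=\; F_H(y)\cdot p - c_H\cdot p \;<\; t - c_H\cdot p \;\leq\; 0
\]
for almost every $y\in\Omega$. In particular $F_H(y)-c_H\neq 0$ a.e., and positivity of $g$ on $(0,\infty)$ yields
\[
v(F_H(y)-c_H)\cdot p \;=\; \frac{g(|F_H(y)-c_H|)}{|F_H(y)-c_H|}\,(F_H(y)-c_H)\cdot p \;<\; 0
\]
for a.e.\ $y\in\Omega$. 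Integrating over $\Omega$ then gives
\[
p\cdot\int_\Omega v(F_H(y)-c_H)\,dy \;<\; 0,
\]
which contradicts \eqref{eq:orthog}. Hence $c_H\in H$.

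I do not expect a serious obstacle. The only delicate sub-case is the boundary possibility $c_H\cdot p = t$, and it is absorbed painlessly: the strict inequality $F_H(y)\cdot p<t$ already holds off the measure-zero set $\Omega\cap\partial H$, so the integrand is strictly negative a.e.\ and no perturbation, smoothing, or energy-minimization argument is required.
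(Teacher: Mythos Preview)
Your proof is correct and follows essentially the same approach as the paper's: both argue the contrapositive by dotting the orthogonality condition with $p$, using that $F_H(y)\cdot p<t$ off the measure-zero hyperplane and that $g>0$ on $(0,\infty)$ to force a strictly negative integrand when $c_H\cdot p\geq t$. Your version is slightly more explicit about the boundary case $c_H\cdot p=t$ and the verification that $F_H(y)\cdot p<t$ on each half, but the argument is the same.
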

\begin{proof}
We prove the contrapositive. Suppose $c_H$ does not lie in $H$, so that $c_H \cdot p \geq t$. Also note if $y \notin \partial H$ then $F_H(y) \in H$ and so $F_H(y) \cdot p < t$, which implies $F_H(y) \cdot p < c_H \cdot p$. Hence the definition \eqref{eq:vdef} of $v$ gives that 
\[
v_H(y) \cdot p 
= \frac{g \big( |F_H(y) - c_H| \big)}{|F_H(y) - c_H|} \big( F_H(y) - c_H \big) \cdot p < 0 .
\]
Integrating over $y \in \Omega \setminus \partial H$ implies that \eqref{eq:orthog} does not hold, which completes the proof. 
\end{proof}
Recall $R_p$ denotes reflection in the hyperplane perpendicular to $p$ through the origin:
\[
R_p(y) = R_{p,0}(y) = y - 2(y \cdot p) p .
\]
This reflection commutes with $v$, with 
\begin{equation} \label{eq:vRp}
v \circ R_p = R_p \circ v ,
\end{equation}
because $v(R_p y)$ equals $g(|R_p y|)R_p y/|R_p y| = R_p \big( g(|y|)y/|y| \big)$, which is $R_p(v(y))$.  
\begin{lemma}[Reflection invariance of the center of mass for $t=0$] \label{le:symmetrycH}
For $p \in S^{n-1}$,  
\[
c_{-p,0} = R_p(c_{p,0}) .
\]
\end{lemma}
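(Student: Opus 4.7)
The plan is to use the uniqueness of the center of mass together with the compatibility between the fold map and the reflection $R_p$ when $t=0$.

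First I would observe that the reflection $R_{p,0}$ and $R_{-p,0}$ coincide, since $R_{-p,0}(y)=y-2(y\cdot(-p))(-p)=y-2(y\cdot p)p=R_p(y)$. From this, the two halfspaces $H_{p,0}$ and $H_{-p,0}$ are swapped by $R_p$, and a direct check on each of them gives the key identity
\[
F_{-p,0} = R_p \circ F_{p,0} .
\]
Indeed, when $y\cdot p<0$ one has $F_{p,0}(y)=y$ and $F_{-p,0}(y)=R_p(y)$, while when $y\cdot p>0$ one has $F_{p,0}(y)=R_p(y)$ and $F_{-p,0}(y)=y=R_p(R_p(y))$.

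Next I would exploit linearity of $R_p$ and the equivariance \eqref{eq:vRp} of $v$. Setting $\widetilde{c}=R_p(c_{p,0})$ and using $R_p^2=\mathrm{id}$, one computes
\[
v\bigl(F_{-p,0}(y) - \widetilde{c}\bigr) = v\bigl(R_p(F_{p,0}(y)) - R_p(c_{p,0})\bigr) = v\bigl(R_p(F_{p,0}(y)-c_{p,0})\bigr) = R_p\bigl(v(F_{p,0}(y)-c_{p,0})\bigr).
\]
Integrating over $\Omega$ and pulling the linear map $R_p$ outside the integral,
\[
\int_\Omega v\bigl(F_{-p,0}(y) - \widetilde{c}\bigr)\, dy \;=\; R_p\int_\Omega v\bigl(F_{p,0}(y) - c_{p,0}\bigr)\, dy \;=\; R_p(0) \;=\; 0,
\]
where the middle integral vanishes by the defining property \eqref{eq:orthog} of $c_{p,0}$.

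Finally I would invoke uniqueness of the center of mass point associated with the halfspace $H_{-p,0}$ (recalled just before the lemma statement from \cite{L20a}): the equation $\int_\Omega v(F_{-p,0}(y)-c)\,dy=0$ has a single solution $c=c_{-p,0}$. Since $\widetilde{c}=R_p(c_{p,0})$ is a solution, this forces $c_{-p,0}=R_p(c_{p,0})$, as claimed. I do not anticipate any real obstacle here; the entire argument is a symmetry-and-uniqueness routine, and the only point that needs mild care is verifying $F_{-p,0}=R_p\circ F_{p,0}$ on both sides of the hyperplane.
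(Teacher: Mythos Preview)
Your proof is correct and follows essentially the same approach as the paper: establish $F_{-p,0}=R_p\circ F_{p,0}$, use linearity of $R_p$ together with the equivariance \eqref{eq:vRp} to show $R_p(c_{p,0})$ satisfies the orthogonality condition \eqref{eq:orthog} for the halfspace $H_{-p,0}$, and conclude by uniqueness of the center of mass. The only difference is that you spell out the case-check for the fold map identity in slightly more detail than the paper does.
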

%
%
\begin{proof}
The hyperplane $\partial H_{p,0}$ passes through the origin, and forms the common boundary of the complementary halfspaces $H_{p,0}$ and $H_{-p,0}$. The reflection $R_p=R_{p,0}$
interchanges the halfspaces, and so their fold maps are related by
\begin{equation} \label{eq:Frelation}
F_{-p,0} = R_p \circ F_{p,0} .
\end{equation}

To prove $c_{-p,0}=R_p(c_{p,0})$, we must show that $R_p(c_{p,0})$ satisfies the condition determining the unique point $c_{-p,0}$, namely condition \eqref{eq:orthog} with $p$ and $t$
replaced by $-p$ and $0$. That is, we must show 
\[
\int_\Omega v \big( F_{-p,0}(y) - R_p(c_{p,0}) \big) \, dy = 0 .
\]
By relations \eqref{eq:vRp} and \eqref{eq:Frelation}, the left-hand side is equal to 
\begin{align*}
\int_\Omega v \big( R_p(F_{p,0}(y) - c_{p,0}) \big) \, dy
& = \int_\Omega R_p v \big( F_{p,0}(y) - c_{p,0} \big) \, dy \\
& = R_p \int_\Omega v \big( F_{p,0}(y) - c_{p,0} \big) \, dy . 
\end{align*}
The last integral indeed equals $0$, by condition \eqref{eq:orthog} for the center of mass point $c_{p,0}$.
\end{proof}

\subsection*{Orthogonality of trial functions to the first excited state}
Write $f$ for a first excited state on $\Omega$, that is, a Neumann eigenfunction of $\Omega$ corresponding to eigenvalue $\mu_2(\Omega)$. We will prove that for some halfspace $H$, the
components of the trial vector $v_H$ are orthogonal to $f$, meaning
\[
\int_\Omega v_H(y) f(y) \, dy = \int_\Omega v \big( F_H(y) - c_H \big) f(y) \, dy = 0 .
\]
Define a vector field
\[
W(p,t) = \int_\Omega v \big( F_H(y) - c_H \big) f(y) \, dy , \qquad p \in S^{n-1} , \quad t \geq 0 ,
\]
where $H=H_{p,t}$. It is easy to see $W$ is continuous, since $F_H$ and $c_H$ depend continuously on the parameters $p,t$ of the halfspace. The task is to show $W$ vanishes for some $(p,t)$,
which we do in \autoref{pr:vanishing} below. 

Let us investigate the vector field for large $t$. Choose $\tau>0$ large enough such that $\Omega \subset H_{p,\tau}$ for all $p \in S^{n-1}$, which can be done since $\Omega$ is bounded. 
The next lemma shows that when $t=\tau$, the vector field $W$ can be evaluated explicitly, and it does not depend on $p$.  
\begin{lemma}[Large positive $t$] \label{le:WT}
For all $p \in S^{n-1}$, 
\[
c_{p,\tau}=0 \qquad \text{and} \qquad W(p,\tau) = w ,
\] 
where $w = \int_\Omega v f \, dy$ is a constant vector (independent of $p$). 
\end{lemma}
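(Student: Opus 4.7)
The proof is essentially an immediate consequence of the choice of $\tau$ combined with the Weinberger centering \eqref{eq:centroid} and the uniqueness of the center of mass point.

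The plan is as follows. First I would observe that since $\tau$ was chosen so that $\Omega \subset H_{p,\tau}$ for every direction $p \in S^{n-1}$, the fold map $F_{p,\tau}$ restricts to the identity on $\Omega$: for every $y \in \Omega$ we have $y \cdot p < \tau$, hence $y \in H_{p,\tau}$ and thus $F_{p,\tau}(y) = y$. This collapses both defining integrals to integrals involving only $v$, not $v \circ F$.

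Next, I would identify $c_{p,\tau}$. By the defining condition \eqref{eq:orthog}, the point $c_{p,\tau}$ is characterized by
\[
\int_\Omega v\bigl( F_{p,\tau}(y) - c_{p,\tau} \bigr) \, dy = 0 ,
\]
and by the previous observation this reduces to
\[
\int_\Omega v(y - c_{p,\tau}) \, dy = 0 .
\]
But the Weinberger normalization \eqref{eq:centroid} says precisely that $c = 0$ satisfies $\int_\Omega v(y - c) \, dy = 0$. Since the center of mass point is unique (as invoked from \cite[Corollary 3]{L20a} in the paragraph defining $c_H$), we conclude $c_{p,\tau} = 0$ for every $p$.

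Finally, substituting both $F_{p,\tau}(y) = y$ and $c_{p,\tau} = 0$ into the definition of $W(p,\tau)$ yields
\[
W(p,\tau) = \int_\Omega v\bigl( F_{p,\tau}(y) - c_{p,\tau} \bigr) f(y) \, dy = \int_\Omega v(y) f(y) \, dy = w ,
\]
which is manifestly independent of $p$. There is no real obstacle here; the only point that requires care is citing uniqueness of the center of mass to rule out any other choice of $c_{p,\tau}$, and the clean cancellation is the whole content of the lemma.
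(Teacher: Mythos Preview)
Your proof is correct and follows essentially the same route as the paper: both observe that $F_{p,\tau}$ is the identity on $\Omega$, verify via the normalization \eqref{eq:centroid} that $c=0$ satisfies the centering condition \eqref{eq:orthog} (hence $c_{p,\tau}=0$ by uniqueness), and then substitute to obtain $W(p,\tau)=w$.
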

\begin{proof}
Since $\Omega \subset H_{p,\tau}$, the fold map fixes $\Omega$, so that $F_{p,\tau}(y)=y$ for all $y \in \Omega$. Hence  
\[
\int_\Omega v \big( F_{p,\tau}(y) - 0 \big) \, dy
= \int_\Omega v(y) \, dy = 0
\]
by the normalization \eqref{eq:centroid}. Therefore the center of mass relation \eqref{eq:orthog} holds with $c_{p,\tau}=0$. Hence $W(p,\tau)  = \int_\Omega v(y) f(y) \, dy = w$. 
\end{proof}
\begin{proposition}[Vanishing of the vector field] \label{pr:vanishing}
$W(p,t)=0$ for some $p \in S^{n-1}$ and $t \in [0,\tau]$.
\end{proposition}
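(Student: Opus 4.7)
The plan is to argue by contradiction using the Petrides degree theorem from \autoref{th:degree}. Suppose $W(p,t)\neq 0$ for all $(p,t)\in S^{n-1}\times [0,\tau]$. Then the normalized map
\[
\Phi(p,t) = \frac{W(p,t)}{|W(p,t)|}, \qquad (p,t)\in S^{n-1}\times [0,\tau],
\]
is a continuous homotopy from $\phi(p):=\Phi(p,0)$ to $\Phi(p,\tau)$. By \autoref{le:WT}, $W(p,\tau)=w$ is independent of $p$, so $\Phi(\cdot,\tau)\equiv w/|w|$ is a constant map. Consequently $\phi : S^{n-1}\to S^{n-1}$ is homotopic to a constant and hence has degree zero.

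Next, I would verify that $\phi$ satisfies the reflection symmetry hypothesis \eqref{eq:reflsymm} of \autoref{th:degree}, namely $\phi(-p) = R_p(\phi(p))$ for all $p\in S^{n-1}$. For this, note that when $t=0$ the hyperplanes for $H_{p,0}$ and $H_{-p,0}$ coincide and the reflection $R_p=R_{p,0}$ swaps the two halfspaces, so that $F_{-p,0} = R_p \circ F_{p,0}$, exactly as used in the proof of \autoref{le:symmetrycH}. Combining this with Lemma \ref{le:symmetrycH} ($c_{-p,0} = R_p(c_{p,0})$) and the equivariance \eqref{eq:vRp} of $v$ under $R_p$, one computes
\begin{align*}
W(-p,0) &= \int_\Omega v\bigl(F_{-p,0}(y)-c_{-p,0}\bigr)\,f(y)\,dy \\
        &= \int_\Omega v\bigl(R_p(F_{p,0}(y)-c_{p,0})\bigr)\,f(y)\,dy \\
        &= R_p \int_\Omega v\bigl(F_{p,0}(y)-c_{p,0}\bigr)\,f(y)\,dy = R_p\bigl(W(p,0)\bigr).
\end{align*}
Since $R_p$ is a linear isometry, dividing by $|W(p,0)|=|W(-p,0)|$ yields $\phi(-p)=R_p(\phi(p))$.

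Now \autoref{th:degree} forces $\deg(\phi)$ to be nonzero (equal to $1$ when $n-1$ is odd and odd when $n-1$ is even), contradicting the earlier conclusion that $\deg(\phi)=0$. Therefore the assumption is false and $W(p,t)=0$ for some $(p,t)\in S^{n-1}\times [0,\tau]$.

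I do not anticipate a serious obstacle here: the reflection-symmetry calculation at $t=0$ is a direct consequence of \autoref{le:symmetrycH} together with \eqref{eq:vRp}, the large-$t$ behavior is handled by \autoref{le:WT}, and the continuity of $W$ on $S^{n-1}\times[0,\tau]$ was already noted. The only point that deserves attention is the verification that the homotopy is legitimate, i.e.\ that $|W(p,t)|$ is bounded away from zero under the contradiction hypothesis; this is automatic by continuity on the compact domain $S^{n-1}\times[0,\tau]$. All the real work has already been packaged into \autoref{th:degree}, \autoref{le:symmetrycH} and \autoref{le:WT}.
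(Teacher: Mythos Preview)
Your proof is correct and follows essentially the same argument as the paper: assume $W$ never vanishes, normalize to get a homotopy between $\phi(\cdot)=W(\cdot,0)/|W(\cdot,0)|$ and the constant map $w/|w|$ from \autoref{le:WT}, verify the reflection symmetry $\phi(-p)=R_p(\phi(p))$ via \eqref{eq:vRp}, \eqref{eq:Frelation} and \autoref{le:symmetrycH}, and then invoke \autoref{th:degree} to obtain a contradiction with degree zero.
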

\begin{proof}
Suppose $W(p,t) \neq 0$ for all $p,t$, and define $\phi=W/|W|$, so that 
\[
\phi : S^{n-1} \times [0,\tau] \to S^{n-1} .
\]
For $t=\tau$ the map is constant, with 
\[
\phi(p,\tau) = \frac{w}{|w|} 
\]
by \autoref{le:WT}. Thus $\phi(p,\tau)$ has degree $0$ as a map from the sphere to itself. 

When $t=0$, we find 
\[
v \big( F_{-p,0}(y) - c_{-p,0} \big) = R_p v \big( F_{p,0}(y) - c_{p,0} \big) 
\]
by \eqref{eq:vRp}, \eqref{eq:Frelation} and \autoref{le:symmetrycH}. Multiplying the last equation by $f(y)$ and integrating over $\Omega$ implies $W(-p,0) = R_p(W(p,0))$, and so 
\[
\phi(-p,0) = R_p(\phi(p,0)) , \qquad p \in S^{n-1} .
\]
That is, $\phi(p,0)$ satisfies the reflection symmetry hypothesis of Petrides's \autoref{th:degree}. Hence $\phi(p,0)$ has nonzero degree, which is impossible because degree is a homotopy invariant. Therefore $W(p,t) = 0$ for some $p,t$. 
\end{proof}

The proof of \autoref{th:Euclidean2ball} will be completed in the next section. 

\subsection{Similarities and differences with the trial functions and homotopy argument of Bucur and Henrot\label{sec:simdiff}} As explained in the Introduction and at the beginning of this section, the
trial function construction we are using developed in stages through work of Szeg\H{o} \cite{S54}, Weinberger \cite{W56}, Nadirashvili \cite{N02}, Girouard, Nadirashvili and Polterovich \cite{GNP09},
Bucur and Henrot \cite{BH19}, Girouard and Laugesen \cite{GL19}, and now the current paper. 

The family of trial functions $v(F_H(y)-c)$ in this paper is the same as employed by Bucur and Henrot \cite{BH19}, although rather than parameterizing the trial functions as they did
in terms of points $(A,B) \in \Rn \times \Rn$, we parameterize in terms of $(p,t,c) \in S^{n-1} \times \R \times \Rn$, following the earlier work by Girouard, Nadirashvili and Polterovich in the disk.
The connection between the parameterizations is that $A=c$ is the ``center'' for the trial function and $B=R_{p,t}(c)$ is the reflection of that point in the hyperplane $\partial H_{p,t}$. Bucur
and Henrot restrict $A$ to lie in $H$, whereas we allow $c$ to lie anywhere in $\Rn$, but in practice this additional flexibility brings us no advantage because when orthogonality to the
constant function is imposed, one finds by \autoref{le:whereiscH} that the unique center of mass point $c_H$ must lie in $H$. 

A significant difference between their method and ours is that we reduce from a $2n$-parameter family of trial functions to an $n$-parameter family. We do so by using uniqueness of the center of
mass point (that is, orthogonality to the constant) to determine $c$ in terms of $p$ and $t$; see condition \eqref{eq:orthog}. Another difference is that they modify their $2n$-dimensional
vector field by an explicit two-step homotopy in order to reduce to a vector field whose degree they can compute explicitly. We rely instead on a reflection symmetry
observation~(\autoref{le:symmetrycH}) that allows us to invoke the degree theory result of Petrides (\autoref{th:degree}).

\section{\bf Proof of \autoref{th:Euclidean2ball}, continued --- deploying the trial functions}  \label{sec:EuclideanRayleigh}	

Fix the halfspace $H=H_{p,t}$ found in \autoref{pr:vanishing} for which the components of the vector field $v_H(y)=v(F_H(y)-c_H)$ are orthogonal to the first and second eigenfunctions of the Neumann Laplacian on $\Omega$, that is, to the constant function and $f$. In this section, we estimate the third Neumann eigenvalue by inserting the components of the vector field as trial functions into the Rayleigh quotient 
\[
Q[u] = \frac{\int_\Omega |\nabla u|^2 \, dy}{\int_\Omega u^2 \, dy} 
\]
for the Laplacian, and applying mass transplantation to arrive at a two-ball situation. These arguments go essentially as for Bucur and Henrot \cite[pp.\,344--345]{BH19}, who were in turn adapting the original techniques of Weinberger \cite{W56}. 

\subsection*{Rayleigh quotient estimate} Substituting each component $v_{H,1},\dots,v_{H,n}$ of the vector field $v_H$ into the Rayleigh characterization 
\[
\mu_3(\Omega) = \min \left\{ Q[u] : u \in H^1(\Omega), u \perp 1, u \perp f \right\}
\]
gives that 
\[
\mu_3(\Omega) \leq \frac{\int_\Omega |\nabla v_{H,j}|^2 \, dy}{\int_\Omega (v_{H,j})^2 \, dy} , \qquad j = 1,\ldots,n .
\]
Write
\[
\Omega_L = H \cap \Omega - c_H , \qquad \Omega_U = H \cap R_H (\Omega) - c_H ,
\]
so that the region decomposes as $\Omega = (\Omega_L + c_H) \cup R_H(\Omega_U + c_H) \cup (\Omega \cap \partial H)$. Here ``$L$'' and ``$U$'' label the portions of the region corresponding to the ``lower'' and ``upper'' halfspaces, except the reflection in the definition of $\Omega_U$ moves that piece to the lower halfspace. The sets $\Omega_L$ and $\Omega_U$ are not assumed to be disjoint, and indeed might coincide. The set $\Omega \cap \partial H$ has measure zero, and so can be neglected in what follows.

By evenness of $v_{H,j}$ with respect to reflection across $\partial H$, we may decompose the Rayleigh quotient as 
\[
\mu_3(\Omega) \leq \frac{\big( \int_{H \cap \Omega} + \int_{H \cap R_H (\Omega)} \big) |\nabla v_{H,j}|^2 \, dy}{\big( \int_{H \cap \Omega} + \int_{H \cap R_H (\Omega)} \big) (v_{H,j})^2 \, dy} .
\]
On $H$ one has $F_H(y)=y$ and so $v_H(y)=v(x)=g(r)x/|x|$ where $x=y-c_H$ and $r=|x|$. Making that change of variable gives
\[
\mu_3(\Omega) \leq \frac{\big( \int_{\Omega_L} + \int_{\Omega_U} \big) |\nabla v_j|^2 \, dx}{\big( \int_{\Omega_L} + \int_{\Omega_U} \big) v_j^2 \, dx} , \qquad j=1,\dots,n .
\]
Applying \autoref{le:trialfn} with the measures $w^*$ and $w_*$ being Lebesgue measure times the sum of indicators $1_{\Omega_L}+1_{\Omega_U}$ now implies   
\[
\mu_3(\Omega) \big( \int_{\Omega_L} + \int_{\Omega_U} \big) g(r)^2 \, dx \leq \big( \int_{\Omega_L} + \int_{\Omega_U} \big) \big( g^\prime(r)^2 + (n-1)r^{-2} g(r)^2 \big) \, dx .
\]

At this stage we will introduce a minor simplification to the usual method, which leads to needing monotonicity of only one function, rather than the usual two. (This simplification was
exploited more seriously by Freitas and Laugesen \cite[formula (14)]{FL18a}.) The technique is to subtract $\mu_3(\B \sqcup \B) \int g^2 \, dx$ from the left side of the inequality
and the equal value $\mu_2(\B) \int g^2 \, dx$ from the right side, to get 
\begin{equation} \label{eq:heq}
\big( \mu_3(\Omega) - \mu_3(\B \sqcup \B) \big) \big( \int_{\Omega_L} + \int_{\Omega_U} \big) g^2 \, dx 
\leq \big( \int_{\Omega_L} + \int_{\Omega_U} \big) h \, dx 
\end{equation}
where
\[
h(r) = g^\prime(r)^2 + (n-1)r^{-2} g(r)^2 - \mu_2(\B) g(r)^2 .
\]
Note $h$ is continuous, since by construction $g$ and $g^\prime$ are continuous for all $r \geq 0$, with $g(0)=0$. Further, $h$ has zero average over the unit ball, since by
summing as in the proof of \autoref{le:trialfn} one finds
\[
\int_\B h(r) \, dx = \sum_{j=1}^n \left( \int_\B |\nabla v_j(x)|^2 \, dx - \mu_2(\B) \int_\B v_j(x)^2 \, dx \right) = 0 ,
\]
using that $v_j$ was constructed to be a second Neumann eigenfunction of the unit ball. 

This function $h$ is strictly decreasing, by the following calculation due to Weinberger \cite[formula (2.15)]{W56}. For $0<r<1$ one differentiates directly to find
\[
h^\prime(r) = - 2 \frac{n-1}{r} \left( g^\prime(r) - \frac{g(r)}{r} \right)^{\! 2} -4\mu_2(\B) g(r) g^\prime(r) < 0 ,
\] 
where $g^{\prime\prime}$ was eliminated from the formula with the help of the Bessel-type equation \eqref{eq:besseltype}. For $r > 1$ the calculation is simpler, since $g$
is constant and $g^\prime=0$ on that range, so  that 
\[
h^\prime(r) = - 2 \frac{n-1}{r^3} g(1)^2 < 0 .
\] 
Hence $h$ is strictly decreasing. 

Since $\vm{\Omega} = 2\vm{\B}$, mass transplantatiion as in \autoref{transplantation} with $\omega \equiv 1$ implies that the right side of \eqref{eq:heq} is less than or equal to $2 \int_\B h(r) \, dx = 0$.
From the left side of \eqref{eq:heq} we conclude $\mu_3(\Omega) - \mu_3(\B \sqcup \B) \leq 0$. Equality obviously holds if $\Omega$ is a union of two disjoint balls of equal volume.
Finally, if equality holds then the equality statement of \autoref{transplantation} implies that $\Omega_L$ and $\Omega_U$ equal $\B$ up to sets of measure zero, and so $\Omega \cap H$
and $\Omega \cap H^c$ are each translates of $\B$, up to sets of measure zero. The Lipschitz boundary assumption on the open set $\Omega$ then forces $\Omega \cap H$ and
$\Omega \cap H^c$ to actually equal those translates of $\B$, and hence $\Omega$ is the union of two disjoint balls. This finishes the proof of \autoref{th:Euclidean2ball}.

\section{\bf Proof of \autoref{th:hyperbolic2ball} (hyperbolic) --- constructing the trial functions}  \label{sec:hyperbolic2ballproof}	

The Euclidean proof in the preceding sections adapts robustly to the hyperbolic setting. First we prepare the eigenvalue problem for the hyperbolic Laplacian, using the Poincar\'{e} ball model.

\subsection*{Construction of the hyperbolic Laplacian}

The Laplacian or Laplace--Beltrami operator on a Riemannian manifold with metric $\mt$ is
\[
\frac{1}{\sqrt{\det \mt}} \sum_{i,j=1}^n \partial_i (\sqrt{\det \mt} \, \mt^{ij} \, \partial_j u) .
\]
In the Poincar\'{e} ball model the metric is given by $(1-|x|^2)^{-2}$ times the Euclidean metric, that is, $\mt_{ij}(x)=(1-|x|^2)^{-2} \, \delta_{ij}$, with inverse $\mt^{ij}(x)=(1-|x|^2)^2 \, \delta_{ij}$. Hence $\sqrt{\det \mt} = (1-|x|^2)^{-n}$ and the hyperbolic Laplacian becomes 
\begin{equation} \label{eq:hypLaplacian}
\dlth u = (1-|x|^2)^n \, \nabla \cdot \big( (1-|x|^2)^{2-n} \, \nabla u \big) .
\end{equation}
\emph{Note.} Our choice of metric gives a hyperbolic space with sectional curvature $\kappa=-4$. No generality is lost by this choice, because to obtain other negative values of the curvature one simply multiplies the metric by a constant factor, which results in the Laplacian and its eigenvalues also getting multiplied by a constant. 

Consider an open set $\Omega \Subset \B$ that is compactly contained in the unit ball and has Lipschitz boundary. The Neumann eigenvalue problem for the hyperbolic Laplacian is
\begin{equation}\label{neumannproblemhyp}
\begin{split}
- \dlth u & = \eta u \ \quad \text{in $\Omega$,} \\
\frac{\partial u}{\partial\nu} & = 0 \qquad \text{on $\partial \Omega$.} 
\end{split}
\end{equation}
The associated Rayleigh quotient is 
\begin{equation*} \label{eq:rayhypQ}
Q_{h}[u] = \frac{\int_\Omega |\nabla u(x)|^2 (1-|x|^2)^2 \, d\gamma(x)}{\int_\Omega u(x)^2 \, d\gamma(x)} \,
\end{equation*}
where the hyperbolic volume element is 
\[
d\gamma(x) = \frac{1}{(1-|x|^2)^n} \, dx .
\]
The weight function $(1-|x|^2)^{-n}$ is positive and bounded on $\overline{\Omega}$, because $\Omega$ is assumed to have compact closure in the unit ball, and so the Rayleigh quotient is well defined for $u$ in the unweighted Sobolev space $H^1(\Omega)$. Since $\partial \Omega$ is assumed to be Lipschitz, $H^1(\Omega)$ imbeds compactly into $L^2(\Omega)$. Hence the Neumann spectrum of the hyperbolic Laplacian is well defined and discrete, with eigenvalues 
\[
0 = \eta_1 \leq \eta_2 \leq \eta_3 \leq \cdots \to \infty 
\] 
that are characterized by the usual minimax variational principle in terms of the Rayleigh quotient. The first eigenvalue is $\eta_1 = 0$, with constant eigenfunction, and the eigenfunctions satisfy the natural boundary condition $\partial u/\partial\nu = 0$. Thus we have arrived at the eigenvalue problem \eqref{neumannproblemhyp} studied in \autoref{th:hyperbolic2ball}. 

The eigenvalues are invariant under hyperbolic isometries applied to $\Omega$. 

\subsection*{Eigenfunctions of a ball} Consider the hyperbolic eigenvalue problem \eqref{neumannproblemhyp} on a ball $B=B(a)$ of radius $a<1$ centered at the origin. In spherical coordinates
$(r,\theta) \in [0,1) \times S^{n-1}$ we may separate variables in the form 
\[
u(r,\theta)=g(r)\tsp(\theta)
\] 
and substitute into the eigenfunction equation 
\[
-\dlth u = \eta u .
\]
By expressing the hyperbolic Laplacian \eqref{eq:hypLaplacian} in spherical coordinates, we obtain that the angular part $\tsp$ satisfies
\[
 \dlts \tsp(\theta) + \ell(\ell+n-2) \tsp(\theta) = 0
\]
where $\ell \geq 0$ is an integer and $\dlts$ denotes the spherical Laplacian. When $\ell=0$,  giving a constant function $\tsp$, the 
eigenfunctions on the ball are purely radial. For positive integers $\ell$, the  angular function $\tsp$ is a spherical harmonic and the eigenvalues have multiplicity greater than $1$.

The separation of variables shows that the radial part $g$ satisfies  
%
\begin{equation}\label{besseltypeeq}
- g^{\prime\prime}(r) = \left( \frac{n-1}{r} + \frac{2(n-2)r}{1-r^2} \right) g^\prime(r) + \left( \frac{\eta}{(1-r^2)^2} - \frac{\ell(\ell+n-2)}{r^2} \right) g(r) 
\end{equation}
for $0<r<a$, with Neumann boundary condition 
\[
g^\prime(a) = 0 .
\]
The key facts about the second eigenvalue and its eigenfunctions are given in \autoref{basic2hyperbolic}, which states that:
\begin{quotation}
the second eigenfunctions of the ball $B(a)$ have angular dependence of the form $g(r)x_j/r$ for $j=1,\dots,n$, and the eigenvalue has multiplicity $n$. The radial part $g$ has $g(0)=0$ and $g^\prime(r)>0$ for $r \in (0,a)$, and satisfies the boundary condition $g^\prime(a)=0$.
\end{quotation}
For example, in $2$ dimensions, the eigenfunctions have the form $g(r) \cos \theta$ and $g(r) \sin \theta$, and, in general, the angular parts $x_1/r,
\dots,x_n/r$ come from the spherical harmonics with $\ell=1$.

\subsection*{Construction of trial functions} Write $B$ for a ball centered at the origin whose hyperbolic volume equals half the hyperbolic volume of $\Omega$. Denote by $a \in (0,1)$ the Euclidean radius of $B$. Let $g(r)$ be the radial part of the second Neumann eigenfunction on $B$, as above, and extend it to the exterior of the ball by letting
\[
g(r) = g(a) , \qquad a < r < 1 .
\]
Notice $g$ is continuous and increasing for $r \in [0,1)$, and positive for $r > 0$, and $g^\prime(r)$ is continuous for $r \in [0,1)$. 

Define $v : \B \to \Rn$ to be the vector field
\begin{equation*} \label{eq:vdefhyp}
v(y) = g(|y|) \frac{y}{|y|} , \qquad y \in \B \setminus \{ 0 \} ,
\end{equation*}
with $v(0)=0$. This $v$ is continuous everywhere, including at the origin since $g(0)=0$. Each component $v_j(y)=g(r)y_j/r$ of the vector field is a hyperbolic Neumann eigenfunction on the
ball $B$ with eigenvalue $\eta_2(B)$. 

\subsubsection*{M\"{o}bius isometries of the ball} The trial functions will depend on a family of M\"{o}bius transformations 
\[
T_x : \overline{\B} \to \overline{\B}
\]
that are parameterized by $x \in \B$ and have the following properties: $T_0(y)=y$ is the identity, and when $x \neq 0$ the map $T_x(\cdot)$ is a M\"{o}bius self-map of the ball such that
$T_x(0)=x$ and $T_x$ fixes the points $\pm x/|x|$ on the unit sphere. In $2$ dimensions the maps can be written in complex notation as 
\[
T_x(y) = \frac{x+y}{1+\overline{x}y} , \qquad x \in \D , \ y \in \overline{\D} , 
\]
where $\D \simeq \B^2$ is the unit disk in the complex plane. In all dimensions \cite[eq.\,(26)]{A81}:
\begin{equation*}
T_x(y) = \frac{(1+2x \cdot y+|y|^2)x +(1-|x|^2)y}{1+2x \cdot y + |x|^2|y|^2} , \qquad x \in \B, \ y \in \overline{\B} .\label{eq:Mobius}
\end{equation*}
Observe $T_x(y)$ is a continuous function mapping $(x,y) \in \B \times \overline{\B}$ to $T_x(y) \in \overline{\B}$, and $T_x(\cdot)$ maps $\B$ to itself and $\partial \B$ to itself, with $T_x(0)=x$ and inverse $(T_x)^{-1} = T_{-x}$. Each mapping $T_x$ is a hyperbolic isometry, and its derivative matrix at $y$ is 
\begin{equation*} \label{eq:derivTx}
(DT_x)(y) = \frac{1-|T_x(y)|^2}{1-|y|^2} \times (\text{orthogonal matrix}) ,
\end{equation*}
by \cite[Section 2.7]{A81}. Taking the determinant shows $T_x$ has Jacobian $(1-|T_x(y)|^2)^n/(1-|y|^2)^n$. 

Hence the numerator of the hyperbolic Rayleigh quotient is invariant under M\"{o}bius transformations, because a straightforward change of variable reveals that 
\[
\int_E |\nabla u(y)|^2 (1-|y|^2)^2 \, d\gamma(y)  = \int_{T_x(E)} |\nabla (u \circ T_x^{-1})(y)|^2 (1-|y|^2)^2 \, d\gamma(y) 
\]
 whenever $E$ is an open subset of the unit ball and $u$ is $C^1$-smooth on $E$. (The denominator of the Rayleigh quotient is similarly invariant.) From the differential geometry
 perspective, the reason for the invariance is that the integrand on the left can be written intrinsically in terms of the metric as $|\nabla u(y)|^2 (1-|y|^2)^2 = |\nabla_{\! \mt} u(y)|_\mt^2$, and the
 M\"{o}bius transformation is an isometry with respect to the hyperbolic metric $\mt$.

\subsubsection*{Translational centering} The M\"{o}bius transformation enables us to impose a useful normalization. By replacing $\Omega$ with its hyperbolic translation $T_x(\Omega)$,
for some $x$, we may require 
\begin{equation} \label{eq:centroidhyp}
\int_\Omega v(y) \, d\gamma(y) = 0 ,
\end{equation}
as we now explain. The existence of such a ``hyperbolic Weinberger center of mass'' was known to Chavel \cite[p.\ 80] {C80}. A detailed proof using Brouwer's fixed point theorem
appeared later in Benguria and Linde \cite[Theorem 6.1]{BL07}. For a proof by energy minimization, yielding also uniqueness and continuous dependence of the translation with respect
to $\Omega$, see Laugesen \cite[Corollary 2]{L20b}, noting that the hypotheses there are satisfied with $f \equiv 1$, since $\int_0^1 g(r)(1-r^2)^{-1} \, dr = \infty$ for our function $g$. 

\subsubsection*{Hyperbolic folding, and the trial functions} Next we develop the hyperbolic fold map. Let 
\[
H_p = \{ y \in \B :  y \cdot p < 0 \} , \qquad p \in S^{n-1} ,
\]
be the halfball with normal vector $p$. Its boundary relative to the ball is the set $\partial H_p = \{ y \in \B :  y \cdot p = 0 \}$, which is the intersection of the ball with a hyperplane passing through the origin. Define
\[
H \equiv H_{p,t} = T_{pt}(H_p) , \qquad p \in S^{n-1}, \quad t \in [0,1) ,
\]
which is the image of the halfball under the M\"{o}bius translation $T_{pt}$. (Taking $t=0$ gives $H_{p,0}=H_p$.) The boundary relative to the ball is the hyperbolic hyperplane $\partial H_{p,t} = T_{pt}(\partial H_p)$. After writing
\[
R_p(y) = y - 2(y \cdot p) p
\]
for the reflection map across $\partial H_p$, we may define the hyperbolic reflection across $\partial H = \partial H_{p,t}$ by conjugation, as
\[
R_H \equiv R_{p,t} = T_{pt} \circ R_p \circ (T_{pt})^{-1} : \overline{\B} \to \overline{\B} .
\]
This reflection is a hyperbolic isometry. Clearly $R_{p,0}=R_p$. Define the hyperbolic ``fold map'' onto $H$ by 
\[
\Phi_H(y) \equiv \Phi_{p,t}(y) = 
\begin{cases}
y & \text{if\ } y \in H, \\
R_H(y) & \text{if\ } y \in \B \setminus H ,
\end{cases}
\]
so that the fold map fixes each point in $H$ and maps each point in $\B \setminus H$ to its hyperbolic reflection across $\partial H$.

Our trial functions will be the components of the vector field
\[
v \circ  T_{-c} \circ \Phi_H ,
\]
where $c \in \B$ is fixed. To visualize these trial functions, imagine ``centering'' the vector field at the point $c$  with the help of the transformation $T_{-c}$, and then replace its values on $\B \setminus H$ by evenly extending the vector field from $H$ via hyperbolic reflection across $\partial H$. The resulting vector field is continuously differentiable on each side of the hyperplane $\partial H$, and is continuous across the hyperplane. 

Contour plots for these trial functions can be visualized similar to the Euclidean case in \autoref{fig:trialfns}, except instead of spreading over the whole plane, the
functions are crammed into the unit disk. 

\subsection*{Orthogonality of trial functions to the constant} We claim that a unique ``center of mass point'' $c_H \in \B$ exists such that each component of the vector field
\[
v_H = v \circ T_{-c_H} \circ \Phi_H
\]
is orthogonal to the constant function (the Neumann ground state) on $\Omega$, meaning
\begin{equation} \label{eq:orthoghyp}
\int_\Omega v_H(y) \, d\gamma(y)  = \int_\Omega v \big( T_{-c_H} \circ \Phi_H(y) \big) \, d\gamma(y) = 0 .
\end{equation}
(Recall $d\gamma(y)$ is the volume element with respect to the hyperbolic metric.) The existence and uniqueness of $c_H$ follow directly from Laugesen~\cite[Corollary 3]{L20b}
with $f \equiv 1$, where the hypotheses
of that result are satisfied because $\int_0^1 g(r)(1-r^2)^{-1} \, dr = \infty$. Furthermore, that work shows that $c_H$ depends continuously on the
parameters $(p,t)$ of $H_{p,t}$. Write $c_{p,t}$ for the center of mass point $c_H$ that corresponds to the hyperbolic half{\-}space $H=H_{p,t}$. 

As in the Euclidean case, reflection commutes with $v$: 
\begin{equation} \label{eq:vRphyp}
v \circ R_p = R_p \circ v .
\end{equation}
Reflection also conjugates with the M\"{o}bius transformations, in the sense that
\begin{equation} \label{eq:Mobreflect}
(T_{R_p x} \circ R_p)(y) = (R_p \circ T_x)(y) , \qquad x \in \B , \quad y \in \overline{\B} , \quad p \in S^{n-1} ,
\end{equation}
as can be verified using the definitions. To understand the last formula, remember that $T_x$ represent a hyperbolic translation by $x$, and so the analogous formula in Euclidean space simply says that $R_p x+R_p y=R_p(x+y)$, which is geometrically obvious.

Now we can show how the center of mass point behaves under reflection. 
\begin{lemma}[Reflection invariance of the center of mass when $t=0$] \label{le:symmetrycHhyp}
For $p \in S^{n-1}$,  
\[
c_{-p,0} = R_p(c_{p,0}) .
\]
\end{lemma}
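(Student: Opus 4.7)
The plan is to imitate the proof of \autoref{le:symmetrycH} in the Euclidean case, with the Euclidean translations replaced throughout by the M\"obius isometries $T_x$. By the uniqueness statement for the center of mass point that follows from Laugesen~\cite{L20b}, it suffices to show that the candidate point $R_p(c_{p,0})$ satisfies the defining orthogonality condition~\eqref{eq:orthoghyp} for $H=H_{-p,0}$, namely
\[
\int_\Omega v\big(T_{-R_p(c_{p,0})} \circ \Phi_{-p,0}(y)\big) \, d\gamma(y) = 0 .
\]

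First I would record the ingredients that make the Euclidean argument go through in the hyperbolic setting. Since $H_{p,0}$ and $H_{-p,0}$ are complementary halfballs meeting along the flat hyperbolic hyperplane $\partial H_p$, and since $R_p = R_{p,0}$ interchanges them, the two fold maps are related by
\[
\Phi_{-p,0} = R_p \circ \Phi_{p,0},
\]
exactly as in the Euclidean case (formula~\eqref{eq:Frelation}). I would also use two commutation identities already established: the reflection-invariance of the radial vector field, $v \circ R_p = R_p \circ v$ from~\eqref{eq:vRphyp}, and the M\"obius conjugation relation $T_{R_p x}\circ R_p = R_p \circ T_x$ from~\eqref{eq:Mobreflect}, specialized to $x = -c_{p,0}$, which gives $T_{-R_p(c_{p,0})}\circ R_p = R_p \circ T_{-c_{p,0}}$.

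Combining these three identities, the integrand in the candidate condition rewrites as
\[
v \circ T_{-R_p(c_{p,0})}\circ \Phi_{-p,0}
= v \circ T_{-R_p(c_{p,0})}\circ R_p \circ \Phi_{p,0}
= v \circ R_p \circ T_{-c_{p,0}} \circ \Phi_{p,0}
= R_p \circ v \circ T_{-c_{p,0}} \circ \Phi_{p,0} .
\]
Integrating over $\Omega$ against $d\gamma$ and pulling the linear map $R_p$ outside the vector-valued integral yields
\[
\int_\Omega v\big(T_{-R_p(c_{p,0})} \circ \Phi_{-p,0}(y)\big)\, d\gamma(y)
= R_p\!\left(\int_\Omega v\big(T_{-c_{p,0}} \circ \Phi_{p,0}(y)\big)\, d\gamma(y)\right) = R_p(0) = 0 ,
\]
where the inner integral vanishes by the defining property~\eqref{eq:orthoghyp} of $c_{p,0}$. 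Uniqueness of the center of mass point then forces $c_{-p,0} = R_p(c_{p,0})$.

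The only step that requires genuine care beyond transcribing the Euclidean argument is the M\"obius conjugation identity~\eqref{eq:Mobreflect}: unlike Euclidean translations, the maps $T_x$ do not form a group, and it is not obvious a priori that $R_p$ should intertwine $T_x$ with $T_{R_p x}$. However, this identity has been recorded already in the excerpt, and geometrically it is simply the statement that reflecting the hyperbolic translation by $x$ across a hyperplane through the origin gives the hyperbolic translation by the reflected vector. Once that is granted, the proof is a clean three-line commutation diagram as above.
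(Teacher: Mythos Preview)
Your proof is correct and is essentially identical to the paper's own argument: both establish the fold-map relation $\Phi_{-p,0}=R_p\circ\Phi_{p,0}$, then chain the identities \eqref{eq:vRphyp} and \eqref{eq:Mobreflect} to rewrite the integrand as $R_p\circ v_{H_{p,0}}$, and conclude by uniqueness of the center of mass. The paper's version just compresses your three displayed equalities into one line and cites \eqref{eq:Frelationhyp} for the fold-map relation.
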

%
%
\begin{proof}
The hyperplane $\partial H_{p,0}$ passes through the origin, and forms the common boundary of the complementary halfballs $H_{p,0}$ and $H_{-p,0}$. The reflection $R_p$ interchanges the
halfballs, and so their fold maps are related by
\begin{equation} \label{eq:Frelationhyp}
\Phi_{-p,0} = R_p \circ \Phi_{p,0} .
\end{equation}

To prove $c_{-p,0}=R_p(c_{p,0})$, we must show that $R_p(c_{p,0})$ satisfies the condition determining the unique point $c_{-p,0}$, namely condition \eqref{eq:orthoghyp} with $p$ and $t$
replaced by $-p$ and $0$. That is, we must show 
\[
\int_\Omega v \big( T_{-R_p(c_{p,0})} \circ \Phi_{-p,0}(y) \big) \, d\gamma(y) = 0 .
\]
The integrand on the left is 
\[
v \circ T_{-R_p(c_{p,0})} \circ \Phi_{-p,0} = R_p \circ v \circ T_{-c_{p,0}} \circ \Phi_{p,0} = R_p \circ v_{H_{p,0}}
\]
by \eqref{eq:vRphyp}, \eqref{eq:Mobreflect} and \eqref{eq:Frelationhyp} . Integrating over $\Omega$ gives zero (as desired) on the right side, thanks to condition \eqref{eq:orthoghyp} for the center of mass $c_{p,0}$. 
\end{proof}

\subsection*{Orthogonality of trial functions to the first excited state} 
Write $f$ for a first excited state on $\Omega$, that is, a hyperbolic Neumann eigenfunction of $\Omega$ corresponding to eigenvalue $\eta_2(\Omega)$. We will prove that for some hyperbolic
halfspace $H$, the components of the trial vector $v_H$ are orthogonal to $f$, meaning
\[
\int_\Omega v \big( T_{-c_H} \circ \Phi_H(y) \big) f(y) \, d\gamma(y) = 0 .
\]
Define a vector field
\[
W(p,t) = \int_\Omega v \big( T_{-c_{p,t}} \circ \Phi_{p,t}(y) \big) f(y) \, d\gamma(y) , \qquad p \in S^{n-1} , \quad t \in [0,1) .
\]
We want to show $W$ vanishes for some $(p,t)$. Note $W$ is continuous, since the center of mass point $c_{p,t}$ and the fold map $\Phi_{p,t}$ depend continuously on $p,t$. 

First we investigate the vector field for $t$ near $1$. Choose $\tau>0$ close enough to $1$ that $\Omega \subset H_{p,\tau}$ for all $p \in S^{n-1}$, which can be done since $\Omega$ is compactly
contained in $\B$. 
The next lemma shows that when $t=\tau$, the vector field $W$ can be evaluated explicitly, and it does not depend on $p$.  
\begin{lemma}[$t$ near $1$] \label{le:WThyp}
For all $p \in S^{n-1}$, 
\[
c_{p,\tau}=0 \qquad \text{and} \qquad W(p,\tau) = w ,
\] 
where $w = \int_\Omega v(y) f(y) \, d\gamma(y)$ is a constant vector (independent of $p$). 
\end{lemma}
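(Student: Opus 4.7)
The plan is to mirror the Euclidean argument in \autoref{le:WT}, exploiting the fact that $\tau$ was chosen so that $\Omega$ lies entirely inside the hyperbolic halfspace $H_{p,\tau}$ for every direction $p$. The main obstacle is essentially nonexistent here because the only nontrivial ingredient (existence of the normalized center of mass) has already been arranged in \eqref{eq:centroidhyp}; the rest is bookkeeping with the M\"obius maps.

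First I would observe that since $\Omega \subset H_{p,\tau}$, the hyperbolic fold map is the identity on $\Omega$: for every $y \in \Omega$, $\Phi_{p,\tau}(y)=y$. Next I would verify that $c=0$ satisfies the defining condition \eqref{eq:orthoghyp} for the center of mass corresponding to the halfspace $H_{p,\tau}$. Indeed, using $T_{-0}=T_0=\mathrm{id}$ together with the previous observation,
\[
\int_\Omega v\big( T_{-0}\circ\Phi_{p,\tau}(y) \big) \, d\gamma(y) = \int_\Omega v(y) \, d\gamma(y) = 0 ,
\]
where the last equality is the translational normalization \eqref{eq:centroidhyp}. Since the center of mass point is unique (as recalled after \eqref{eq:orthoghyp}, via \cite[Corollary 3]{L20b}), we conclude $c_{p,\tau}=0$ for every $p \in S^{n-1}$.

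Finally I would substitute this back into the definition of $W(p,\tau)$. Using $c_{p,\tau}=0$, $T_0=\mathrm{id}$, and $\Phi_{p,\tau}|_\Omega=\mathrm{id}$, we get
\[
W(p,\tau) = \int_\Omega v\big( T_{-c_{p,\tau}}\circ\Phi_{p,\tau}(y) \big) f(y) \, d\gamma(y) = \int_\Omega v(y) f(y) \, d\gamma(y) = w ,
\]
which is independent of $p$, completing the proof.
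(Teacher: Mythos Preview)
Your proof is correct and essentially identical to the paper's own argument: both use that $\Omega\subset H_{p,\tau}$ makes the fold map the identity on $\Omega$, invoke the normalization \eqref{eq:centroidhyp} together with $T_0=\mathrm{id}$ to show $c=0$ satisfies \eqref{eq:orthoghyp}, and then substitute into the definition of $W$. Your explicit appeal to uniqueness of the center of mass point is a welcome clarification.
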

\begin{proof}
Since $\Omega \subset H_{p,\tau}$, the fold map fixes $\Omega$, so that $\Phi_{p,\tau}(y)=y$ for all $y \in \Omega$. Hence  
\[
\int_\Omega v \big( \tau_0 \circ \Phi_{p,\tau}(y) \big) \, d\gamma(y)
= \int_\Omega v(y) \, d\gamma(y) = 0
\]
by the normalization \eqref{eq:centroidhyp}. Therefore the center of mass relation \eqref{eq:orthoghyp} holds with $c_{p,\tau}=0$. Hence
\[
W(p,\tau) = \int_\Omega v(y) f(y) \, d\gamma(y) = w .
\]
\end{proof}
It follows that $W$ vanishes at some point: 
\begin{proposition}[Vanishing of the vector field] \label{pr:vanishinghyp}
$W(p,t)=0$ for some $p \in S^{n-1}$ and $t \in [0,\tau]$.
\end{proposition}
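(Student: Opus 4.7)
The plan is to mirror the Euclidean argument in \autoref{pr:vanishing}, with the hyperbolic fold map $\Phi_H$, M\"obius centering $T_{-c_H}$, and the reflection lemma \autoref{le:symmetrycHhyp} taking the place of their Euclidean counterparts. Specifically, I would argue by contradiction: assume $W(p,t) \neq 0$ for all $(p,t) \in S^{n-1} \times [0,\tau]$, and define
\[
\phi(p,t) = \frac{W(p,t)}{|W(p,t)|} , \qquad \phi : S^{n-1} \times [0,\tau] \to S^{n-1}.
\]
Since $W$ is continuous in $(p,t)$ (as noted before \autoref{le:WThyp}), $\phi$ is a continuous homotopy between its two endpoint maps $\phi(\cdot,0)$ and $\phi(\cdot,\tau)$, and these two self-maps of $S^{n-1}$ must therefore have the same degree.

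First I would compute the degree at $t=\tau$. By \autoref{le:WThyp}, $W(p,\tau)$ is the constant vector $w$ (independent of $p$), so that $\phi(p,\tau) = w/|w|$ is a constant map and in particular $\deg \phi(\cdot,\tau) = 0$. Next I would compute the reflection symmetry at $t=0$. Combining \eqref{eq:vRphyp}, \eqref{eq:Mobreflect}, and \eqref{eq:Frelationhyp} together with \autoref{le:symmetrycHhyp} gives, for each $y \in \Omega$,
\[
v\big( T_{-c_{-p,0}} \circ \Phi_{-p,0}(y) \big) = R_p \, v\big( T_{-c_{p,0}} \circ \Phi_{p,0}(y) \big) ,
\]
by the same sequence of substitutions used in the proof of \autoref{le:symmetrycHhyp}. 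Multiplying by $f(y)$ and integrating over $\Omega$ with respect to hyperbolic volume, and then pulling $R_p$ out of the integral (since it is linear), yields $W(-p,0) = R_p\big( W(p,0) \big)$. Dividing by norms gives the reflection symmetry
\[
\phi(-p,0) = R_p\big( \phi(p,0) \big) , \qquad p \in S^{n-1}.
\]

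At this point \autoref{th:degree} of Petrides applies to $\phi(\cdot,0) : S^{n-1} \to S^{n-1}$ (with $m=n-1$) and shows $\deg \phi(\cdot,0) \neq 0$. This contradicts the homotopy invariance of degree together with $\deg \phi(\cdot,\tau) = 0$, and so the hypothesis $W \neq 0$ was false; hence $W(p,t) = 0$ for some $(p,t) \in S^{n-1} \times [0,\tau]$, as claimed.

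The only step requiring any care is verifying the reflection identity $W(-p,0) = R_p(W(p,0))$, because the three ingredients \eqref{eq:vRphyp}, \eqref{eq:Mobreflect}, \eqref{eq:Frelationhyp} must be chained together in the correct order and with the correct arguments; all the other steps are essentially formal once \autoref{le:WThyp}, \autoref{le:symmetrycHhyp}, and \autoref{th:degree} are in hand. There is no genuine topological obstacle beyond what Petrides's theorem has already overcome.
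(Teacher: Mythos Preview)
Your proposal is correct and follows essentially the same approach as the paper: argue by contradiction, normalize to $\phi = W/|W|$, use \autoref{le:WThyp} to see that $\phi(\cdot,\tau)$ is constant (degree $0$), verify the reflection symmetry $W(-p,0)=R_p(W(p,0))$ at $t=0$ via \eqref{eq:vRphyp}, \eqref{eq:Mobreflect}, \eqref{eq:Frelationhyp} and \autoref{le:symmetrycHhyp}, and then invoke Petrides's \autoref{th:degree} to contradict homotopy invariance of degree. The paper's own proof is in fact terser than yours, simply establishing the reflection identity and then referring back to the Euclidean \autoref{pr:vanishing} for the remaining steps.
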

\begin{proof}
When $t=0$, we find 
\[
v \circ T_{- c_{-p,0}} \circ \Phi_{-p,0} = R_p \circ v \circ T_{-c_{p,0}} \circ \Phi_{p,0}  
\]
by \eqref{eq:vRphyp}, \eqref{eq:Mobreflect}, \eqref{eq:Frelationhyp} and \autoref{le:symmetrycHhyp}. Multiplying the last equation by $f(y) \, d\gamma(y)$ and integrating over $\Omega$
implies $W(-p,0) = R_p(W(p,0))$, so that $W$ satisfies the reflection symmetry condition. The rest of the proof goes exactly as in the Euclidean case in \autoref{pr:vanishing}, except
using \autoref{le:WThyp} instead of \autoref{le:WT}
\end{proof}

\subsection*{Estimating the Rayleigh quotient} Fix the parameters $(p,t)$ found in \autoref{pr:vanishinghyp}, and write $H=H_{p,t}$ for the hyperbolic halfspace, so that by the proposition and the
earlier formula \eqref{eq:orthoghyp}, the vector field $v_H(y)=v\big( T_{-c_H} \circ \Phi_H(y) \big)$ is orthogonal in $L^2(\Omega;d\gamma)$ to both the constant function and the first excited
state $f$. 

Substituting each component $v_{H,1},\dots,v_{H,n}$ of the vector field as a trial function into the Rayleigh characterization of the third eigenvalue gives that 
\[
\eta_3(\Omega) \leq Q_{h}[v_{H,j}] = \frac{\int_\Omega |\nabla v_{H,j}|^2 (1-|y|^2)^2 \, d\gamma(y)}{\int_\Omega (v_{H,j})^2 \, d\gamma(y)} , \qquad j = 1,\ldots,n .
\]
By evenness of $v_{H,j}$ with respect to hyperbolic reflection across $\partial H$, and using the invariance of the numerator and denominator integrals under hyperbolic reflection, we find 
\[
\eta_3(\Omega) \leq \frac{\big( \int_{H \cap \Omega} + \int_{H \cap R_H (\Omega)} \big) |\nabla v_{H,j}|^2 (1-|y|^2)^2 \, d\gamma(y)}{\big( \int_{H \cap \Omega} + \int_{H \cap R_H (\Omega)} \big)
(v_{H,j})^2 \, d\gamma(y)} .
\]

Write
\[
\Omega_L = T_{ - c_H}(H \cap \Omega) , \qquad \Omega_U = T_{- c_H}(H \cap R_H (\Omega)) ,
\]
so that the region decomposes as $\Omega =T_{c_H}(\Omega_L) \cup R_H(T_{c_H}(\Omega_U)) \cup (\Omega \cap \partial H)$.  The analogy with the Euclidean case earlier in the paper should at this
point be clear. On $H$ one has $\Phi_H(y)=y$ and so $v_H(y)=v(x)$ where $x=T_{-c_H}(y)$. Changing variable from $y$ to $x$ with the help of M\"{o}bius invariance of the numerator and denominator integrals, we obtain the inequality
\[
\eta_3(\Omega) \leq 
\frac{\big( \int_{\Omega_L} + \int_{\Omega_U} \big) |\nabla v_j|^2 (1-|x|^2)^2 \, d\gamma(x)}{\big( \int_{\Omega_L} + \int_{\Omega_U} \big) v_j^2 \, d\gamma(x)} , \qquad j=1,\dots,n .
\]
Applying \autoref{le:trialfn} with 
\[
dw^*(x)=(1-|x|^2)^2 (1_{\Omega_L}+1_{\Omega_U})(x) \, d\gamma(x) , \qquad dw_*(x)=(1_{\Omega_L}+1_{\Omega_U})(x) \, d\gamma(x) ,
\]
we deduce that 
\[
\begin{split}
& \eta_3(\Omega) \big( \int_{\Omega_L} + \int_{\Omega_U} \big) g(r)^2 \, d\gamma(x) \\
& \leq \big( \int_{\Omega_L} + \int_{\Omega_U} \big) \big( g^\prime(r)^2 + (n-1)r^{-2} g(r)^2 \big) (1-r^2)^2 \, d\gamma(x) 
\end{split}
\]
where $r=|x|$. 

Subtracting $\eta_3(B \sqcup B) \int g^2 \, d\gamma(x)$ from the left side of the inequality and the equal value $\eta_2(B) \int g^2 \, d\gamma(x)$ from the right side gives that 
\begin{equation} \label{eq:heqhyp}
\big( \eta_3(\Omega) - \eta_3(B \sqcup B) \big) \big( \int_{\Omega_L} + \int_{\Omega_U} \big) g(r)^2 \, d\gamma(x) \leq \big( \int_{\Omega_L} + \int_{\Omega_U} \big) h(r) \, d\gamma(x) 
\end{equation}
where
\[
h(r) = \big( g^\prime(r)^2 + (n-1)r^{-2} g(r)^2 \big) (1-r^2)^2  - \eta_2(B) g(r)^2 .
\]
Note $h$ is continuous, since by construction $g$ and $g^\prime$ are continuous for $0 \leq r < 1$, with $g(0)=0$. Further, $h$ has integral zero over the ball $B$, since by \eqref{eq:threestar} one finds
\begin{align*}
\int_B h(r) \, d\gamma(x) 
& = \sum_{j=1}^n \left( \int_B |\nabla v_j(x)|^2 (1-|x|^2)^2 \, d\gamma(x) - \eta_2(B) \int_B v_j(x)^2 \, d\gamma(x) \right) \\
& = 0 ,
\end{align*}
using here that each $v_j$ is a hyperbolic eigenfunction on $B$ with eigenvalue $\eta_2(B)$.

The function $h$ is strictly decreasing for $0<r<a$, as one sees by differentiating directly to find
\begin{align*}
h^\prime(r) 
& = - 2 \frac{n-1}{r} (1-r^4) \left[ \left( g^\prime(r) - \frac{1-r^2}{1+r^2} \frac{g(r)}{r} \right)^{\! 2} + \frac{4r^2}{(1+r^2)^2} \frac{g(r)^2}{r^2} \right] - 4\eta_2(B) g(r) g^\prime(r) \\
& < 0 ,
\end{align*}
where $g^{\prime\prime}$ was eliminated from the formula using the Bessel-type equation \eqref{besseltypeeq} with $\ell=1$. (Equivalent derivative calculations were given by Ashbaugh and Benguria \cite[formula (6.1)]{AB95} and Xu \cite[p.~158]{X95}.) For $a<r<1$ the calculation is simpler, because $g$ is constant and $g^\prime=0$ on that range, and so  
\[
h^\prime(r) = (n-1) g(a)^2 \, \frac{d\ }{dr} \, r^{-2} (1-r^2)^2 < 0 .
\] 
Hence $h$ is strictly decreasing for $0<r<1$. 

Since $\Omega$ has twice the hyperbolic volume of $B$, the mass transplantation \autoref{transplantation} applied with the hyperbolic volume weight $\omega(r)=(1-r^2)^{-n}$ implies that the right side of \eqref{eq:heqhyp} is less than or equal to $2 \int_B h(r) \, d\gamma(x) = 0$. (This weight $\omega$ is defined only for $0 \leq r < 1$, but that is enough for applying \autoref{transplantation} since $\Omega_L, \Omega_U$ and $B$ all lie in the unit disk.) Equality certainly holds if $\Omega$ is a union of two disjoint balls of equal hyperbolic volume. In the other direction, if the right side of \eqref{eq:heqhyp} equals $0$ then the equality statement of \autoref{transplantation} implies that $\Omega_L$ and $\Omega_U$ equal $B$ up to sets of measure zero, and so $\Omega \cap H$ and $\Omega \cap H^c$ are each hyperbolic translates of $B$, up to sets of measure zero. The Lipschitz boundary assumption on the open set $\Omega$ then requires $\Omega \cap H$ and $\Omega \cap H^c$ to actually equal those translates of $B$, and hence $\Omega$ is the union of two disjoint balls of equal hyperbolic volume. The proof of \autoref{th:hyperbolic2ball} is complete.

\section*{Acknowledgments}
This research was supported by the Funda\c c\~{a}o para a Ci\^{e}ncia e a Tecnologia (Portugal) through project UIDB/00208/2020 (Pedro Freitas), 
and a grant from the Simons Foundation (\#429422 to Richard Laugesen). Mark Ashbaugh was generous with his time and feedback on many issues.  We are grateful to Alexandre Girouard for suggestions that improved the paper, and especially for pointing us to the degree theory result of Petrides, by which we unlocked the proof in odd dimensions.

\appendix

\section{Second Neumann eigenfunction of ball is not radial}
\label{sec:balleigen}

Separation of variables reveals the form of the eigenfunctions for the ball, but does not say whether the second eigenfunction is purely radial or has angular dependence. The next result shows it is not radial. 
\begin{proposition}[Euclidean Laplacian]\label{basic2Euclidean} The second eigenspace of the Neumann Laplacian on the ball $B(a) \subset \Rn$ has a basis of the form $\{ g(r)x_j/r : j=1,\dots,n \}$, where the radial
part $g$ has $g(0)=0$ and $g^\prime(r)>0$ for $r \in (0,a)$, and satisfies the boundary condition $g^\prime(a)=0$.
\end{proposition}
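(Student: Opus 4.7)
The plan is to identify the second eigenspace by separation of variables, and then to rank the smallest eigenvalues in each spherical-harmonic sector using Rayleigh's principle and a Rolle-type argument on Bessel functions.

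First I would exploit the rotational symmetry of $B(a)$ to write every eigenfunction in the form $g_\ell(r)Y_\ell(\theta)$, where $Y_\ell$ is a surface spherical harmonic of degree $\ell\geq 0$. The radial part then solves the Bessel-type equation
\[
-g_\ell''-\frac{n-1}{r}g_\ell'+\frac{\ell(\ell+n-2)}{r^2}g_\ell=\mu g_\ell, \qquad 0<r<a,
\]
with regularity at the origin and $g_\ell'(a)=0$. The unique regular solution is $g_\ell(r)=r^{1-n/2}J_{\ell+n/2-1}(\sqrt{\mu}\,r)$, so the spectrum splits into sectors with eigenvalues $\mu_{\ell,1}<\mu_{\ell,2}<\cdots$ indexed by $\ell$. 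The constant state contributes $\mu_{0,1}=0$, and the $\ell=1$ sector contributes exactly the $n$ candidate functions $g_1(r)x_j/r$, since $\{x_j/r\}_{j=1}^n$ spans the degree-one spherical harmonics. This matches the claimed basis, provided I can verify that $\mu_{1,1}$ is the second eigenvalue overall.

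The main obstacle is to show $\mu_{1,1}<\mu_{\ell,m}$ for every $(\ell,m)\neq(0,1),(1,1)$, which I would split into two sub-claims. For $\ell\geq 2$: the Rayleigh quotient within the $\ell$-sector (minimized over radial $g$ with $g(0)=0$) increases strictly as the centrifugal potential $\ell(\ell+n-2)/r^2$ grows, so $\mu_{\ell,1}$ is strictly monotone in $\ell\geq 1$, giving $\mu_{1,1}<\mu_{\ell,1}$. For the radial family $\ell=0$: using the Bessel recurrences to evaluate the Neumann condition, one finds $\mu_{0,2}=(j_{n/2,1}/a)^2$ where $j_{n/2,1}$ is the first positive zero of $J_{n/2}$, while $\mu_{1,1}=(z^*/a)^2$ where $z^*$ is the first positive critical point of $\varphi(z):=z^{1-n/2}J_{n/2}(z)$. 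Since $\varphi(0+)=0$ (because $J_{n/2}(z)\sim cz^{n/2}$) and $\varphi(j_{n/2,1})=0$ with $\varphi>0$ in between, Rolle's theorem places a critical point in $(0,j_{n/2,1})$, whence $z^*<j_{n/2,1}$ and $\mu_{1,1}<\mu_{0,2}$.

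Finally I would verify the listed properties of $g=g_1$ directly from the formula $g(r)=r^{1-n/2}J_{n/2}(kr)$ with $k=z^*/a$. The leading asymptotic $J_{n/2}(z)\sim cz^{n/2}$ immediately gives $g(0)=0$ and $g'(0+)>0$; the Neumann condition $g'(a)=0$ is the defining equation for $k$; and since $a$ is by construction the first positive critical point of $g$, the continuous function $g'$ cannot vanish earlier on $(0,a)$, and so remains positive throughout that interval.
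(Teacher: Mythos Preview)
Your argument is correct: the separation of variables, the monotonicity of the lowest sector eigenvalue in $\ell$ via the Rayleigh quotient, the identification $\mu_{0,2}=(j_{n/2,1}/a)^2$ through the recurrence $\frac{d}{dz}[z^{-\nu}J_\nu(z)]=-z^{-\nu}J_{\nu+1}(z)$, and the Rolle step on $\varphi(z)=z^{1-n/2}J_{n/2}(z)$ all go through, and the final monotonicity of $g$ follows exactly as you say. The paper does not supply its own proof here; it simply declares the result well known and refers to Ashbaugh and Benguria \cite[p.~562]{AB93} for a Bessel-function approach, which is precisely the line you have taken.
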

The result is well known, and was used by Weinberger \cite{W56}. It can be proved in a variety of ways that either emphasize or de-emphasize the role of special functions. For example, an approach using Bessel functions is given by Ashbaugh and Benguria \cite[p.~562]{AB93}. 

The hyperbolic Laplacian for the Poincar\'{e} ball model with curvature $-4$ was derived in \autoref{sec:hyperbolic2ballproof}, where it was found to be 
\begin{equation*}\label{eq:hyplaplacianagain}
\dlth u = (1-|x|^2)^n \, \nabla \cdot \big( (1-|x|^2)^{2-n} \, \nabla u \big) .
\end{equation*}
The second Neumann eigenfunction of this operator too has the form $g(r)x_j/r$.
\begin{proposition}[Hyperbolic Laplacian] \label{basic2hyperbolic} The second eigenspace of the hyperbolic Neumann Laplacian on the ball $B(a), 0<a<1$, has a basis of the form $\{ g(r)x_j/r : j=1,\dots,n \}$, where
the radial part $g$ has $g(0)=0$ and $g^\prime(r)>0$ for $r \in (0,a)$, and satisfies the boundary condition $g^\prime(a)=0$.
\end{proposition}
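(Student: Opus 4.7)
The plan is to mirror the Euclidean case \autoref{basic2Euclidean}, with adjustments for the weighted radial ODE \eqref{besseltypeeq}. Separation of variables writes every Neumann eigenfunction of $B(a)$ as $u(r,\theta) = g(r)\tsp(\theta)$, with $\tsp$ a spherical harmonic of some degree $\ell \geq 0$ and $g$ satisfying \eqref{besseltypeeq} together with regularity at $r=0$ and $g'(a)=0$. For each fixed $\ell$, this singular Sturm--Liouville problem has a discrete increasing sequence of eigenvalues $\eta_{\ell,1}<\eta_{\ell,2}<\cdots$, with $\eta_{0,1}=0$ arising from the constant ground state. The full Neumann spectrum of $B(a)$ is the union of these sequences, each $\eta_{\ell,k}$ entering with multiplicity equal to the dimension of the space of degree-$\ell$ spherical harmonics ($1$ for $\ell=0$, $n$ for $\ell=1$, and so on). The proposition will follow once I identify $\eta_{1,1}$ as the second smallest element of this union, since the corresponding angular factors span $\{x_j/r : j=1,\dots,n\}$.

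The monotonicity of the centrifugal term $\ell(\ell+n-2)/r^2$ in $\ell$ gives $\eta_{\ell,1}$ strictly increasing in $\ell$ by a standard Sturm--Liouville comparison, so the main obstacle is the cross-mode inequality $\eta_{1,1} < \eta_{0,2}$. My plan is to recast both problems in self-adjoint form using the weights induced by $(1-r^2)^{2-n}$ and $(1-r^2)^{-n}$, and then proceed by one of two routes: either (a) insert an explicit trial function of the form $r\, \chi(r)$ into the Rayleigh quotient for the $\ell=1$ problem and bound it against a lower estimate for $\eta_{0,2}$ derived from the obligatory sign change of the second radial eigenfunction, or (b) adapt the hyperbolic Szeg\H{o}--Weinberger analyses of Ashbaugh--Benguria \cite[{\S}6]{AB95} and Xu \cite[Theorem 1]{X95}, which implicitly establish this ordering; their arguments transfer to the present ball setting with only notational changes arising from the different curvature-dependent weights. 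I expect this comparison step to be where the real work lies.

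Finally, for the structural properties of $g = g_{1,1}$: a Frobenius analysis of \eqref{besseltypeeq} at $r=0$ with $\ell=1$ yields two linearly independent solutions behaving like $r$ and $r^{-(n-1)}$, and regularity of $g(r)x_j/r$ at the origin selects $g(r)\sim cr$, giving $g(0)=0$. Being the first eigenfunction in its Sturm--Liouville class, $g$ has no interior zeros on $(0,a)$ and can be normalized to be positive. Strict monotonicity $g'(r)>0$ on $(0,a)$ then comes from placing \eqref{besseltypeeq} in self-adjoint form $(p(r) g')' = q(r) g$ with $p(r) = r^{n-1}(1-r^2)^{2-n}$ and ruling out an interior zero of $g'$ by combining the sign of $g$ with the Neumann condition $g'(a)=0$; this final step is routine and parallels the Euclidean argument.
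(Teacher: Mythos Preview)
Your proposal is correct and aligns with the paper's treatment. The paper does not give an independent proof of \autoref{basic2hyperbolic}; it defers entirely to the literature, citing the ODE comparison method of Ashbaugh--Benguria \cite[\S\S3,6]{AB95} (building on Bandle \cite{B72,B80}), the parallel argument of Xu \cite[Lemma~2]{X95}, and the later raising/lowering operator approach of \cite{AB01,BL07}. Your route~(b) is precisely this, so you are on the same path.

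Two minor remarks. First, your route~(a) for the cross-mode inequality $\eta_{1,1}<\eta_{0,2}$ is underspecified; a generic trial-function bound on $\eta_{1,1}$ together with a crude lower bound on $\eta_{0,2}$ will not obviously give a \emph{strict} inequality valid for all radii $a\in(0,1)$, which is what you need to exclude a radial function from the second eigenspace. Route~(b) is the reliable option. Second, your monotonicity argument for $g$ is correct but relies on a fact you left implicit: in the self-adjoint form $(pg')'=q g$ with $q(r)=\big[(n-1)/r^2-\eta/(1-r^2)^2\big]p(r)$, the coefficient $q$ changes sign at most once (positive near $0$, negative near $a$), since $(n-1)/r^2$ decreases and $\eta/(1-r^2)^2$ increases. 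This is what lets you propagate $pg'<0$ from a putative first interior zero of $g'$ all the way to $r=a$, contradicting the Neumann condition. The Euclidean argument has the same structure, so calling it ``routine'' is fair.
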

For a proof using an ODE comparison method, see Ashbaugh and Benguria \cite[Sections 3 and 6]{AB95}, which is based on arguments of Bandle \cite{B72}, \cite[pp.\,122--128]{B80} for general weighted eigenvalue problems. Bandle stated her result in $2$ dimensions, and observed that the method extends easily to higher dimensions  \cite[p.\ 153]{B80}. To connect Ashbaugh and Benguria's notation to ours, first note that the hyperbolic distance $s$ from the origin to a point at Euclidean radius $r$ satisfies $ds/dr=(1-r^2)^{-1}$, and so $s=\arctanh r$. In their paper, the hyperbolic distance $\theta$ from the origin is twice as great, due to a different normalization, and so the relation between the variables is $\theta=2s=2 \arctanh r$.  

A proof along similar lines is given by Xu \cite[Lemma~2]{X95}. Xu's expressions are given in terms of the quantity $\alpha(s) = \frac{1}{2} \sinh (2s)=r/(1-r^2)$. 

A later approach by Ashbaugh and Benguria \cite[Section 3]{AB01} develops a systematic proof in terms of raising and lowering operators for the spherical Laplacian, with subsequent hyperbolic
analogues by Benguria and Linde \cite[Section 3]{BL07} (and although some constant factors seem to be missing from their raising or lowering relations, this does not materially affect the proofs).
These papers concern the Dirichlet eigenvalues, but the insights hold also in the Neumann case.

\bibliographystyle{plain}

\begin{thebibliography}{99}

\bibitem{A81}
L. V. Ahlfors. 
M\"{o}bius Transformations in Several Dimensions. 
Ordway Professorship Lectures in Mathematics. University of Minnesota, School of Mathematics, Minneapolis, Minn., 1981.

\bibitem{AF12}
P. R. S. Antunes and P. Freitas.
\emph{Numerical optimization of low eigenvalues of the Dirichlet and Neumann Laplacians.} 
J. Optim. Theory Appl. 154 (2012), no. 1, 235--257. 

\bibitem{AFK17}
P. R. S. Antunes, P. Freitas and D. Krej\v{c}i\v{r}\'{\i}k,
\emph{Bounds and extremal domains for Robin eigenvalues with negative boundary parameter}.
Adv. Calc. Var. 10 (2017), 357--379. 

\bibitem{AB93}
M. S. Ashbaugh and R. D. Benguria.
\emph{Universal bounds for the low eigenvalues of Neumann Laplacians in $n$ dimensions.} SIAM J. Math. Anal. 24 (1993), no. 3, 557--570. 

\bibitem{AB95}
M. S. Ashbaugh and R. D. Benguria.
\emph{Sharp upper bound to the first nonzero Neumann eigenvalue for bounded domains in spaces of constant curvature.}
J. London Math. Soc. (2) 52 (1995), no. 2, 402--416.

\bibitem{AB01}
M. S. Ashbaugh and R. D. Benguria.
\emph{A sharp bound for the ratio of the first two Dirichlet eigenvalues of a domain in a hemisphere of $S^n$.} 
Trans. Amer. Math. Soc. 353 (2001), no. 3, 1055--1087.

\bibitem{B72}
C. Bandle. 
\emph{Isoperimetric inequality for some eigenvalues of an inhomogeneous, free membrane.} 
SIAM J. Appl. Math. 22 (1972), 142--147.

\bibitem{B80}
C. Bandle. 
Isoperimetric Inequalities and Applications. 
Monographs and Studies in Mathematics, 7. Pitman (Advanced Publishing Program), Boston, Mass.-London, 1980.

\bibitem{BL07}
R. D. Benguria and H. Linde. 
\emph{A second eigenvalue bound for the Dirichlet Laplacian in hyperbolic space.} 
Duke Math. J. 140 (2007), no. 2, 245--279.

\bibitem{BH19}
D. Bucur and A. Henrot. 
\emph{Maximization of the second non-trivial Neumann eigenvalue.} 
Acta Math. 222 (2019), no. 2, 337--361.

\bibitem{C80}
I. Chavel. 
\emph{Lowest-eigenvalue inequalities.} 
In: Geometry of the Laplace operator (Proc. Sympos. Pure Math., Univ. Hawaii, Honolulu, Hawaii, 1979), pp. 79--89, 
Proc. Sympos. Pure Math., XXXVI, Amer. Math. Soc., Providence, R.I., 1980. 

\bibitem{C84}
I. Chavel. 
Eigenvalues in Riemannian Geometry. Including a chapter by Burton Randol. With an appendix by Jozef Dodziuk. Pure and Applied Mathematics, 115. Academic Press, Inc., Orlando, FL, 1984.

\bibitem{FL20}
P. Freitas and R. S. Laugesen.
\emph{From Steklov to Neumann and beyond, via Robin: the Szeg\H{o} way}.
Canadian J. Math. 72 (2020), 1024--1043 

\bibitem{FL18a}
P. Freitas and R. S. Laugesen.
\emph{From Neumann to Steklov and beyond, via Robin: the Weinberger way.} 
Am. J. Math., to appear. \arxiv{1810.07461}.


\bibitem{GL19} A. Girouard and R. S. Laugesen.
\emph{Robin spectrum: two disks maximize the third eigenvalue.} 
Indiana Univ. Math. J., to appear. \arxiv{1907.13173} 

\bibitem{GNP09} 
A. Girouard, N. Nadirashvili and I. Polterovich. 
\emph{Maximization of the second positive Neumann eigenvalue for planar domains.} 
J. Differential Geom. 83 (2009), no. 3, 637--661. 

\bibitem{GP10}
A. Girouard and I. Polterovich. 
\emph{Shape optimization for low Neumann and Steklov eigenvalues.} 
Math. Methods Appl. Sci. 33 (2010), no. 4, 501--516.

\bibitem{H17}
A. Henrot, ed. Shape Optimization and Spectral Theory. De Gruyter Open, Warsaw, 2017.

\bibitem{KNPP18}
M. Karpukhin, N. Nadirashvili, A. Penskoi and I. Polterovich. 
\emph{An isoperimetric inequality for Laplace eigenvalues on the sphere.}
J. Differential Geom., to appear. 
\arxiv{1706.05713}.

\bibitem{KNPP20}
M. Karpukhin, N. Nadirashvili, A. Penskoi and I. Polterovich. 
\emph{Conformally maximal metrics for Laplace eigenvalues on surfaces.}
\arxiv{2003.02871}.

\bibitem{KS20}
M. Karpukhin and D. Stern. 
\emph{Min-max harmonic maps and a new characterization of conformal eigenvalues.} 
\arxiv{2004.04086}.

\bibitem{L20a} 
R. S. Laugesen. 
\emph{Well-posedness of Weinberger's center of mass by Euclidean energy minimization.} J. Geom. Anal. (2020) \doi{10.1007/s12220-020-00402-5}

\bibitem{L20b} 
R. S. Laugesen. 
\emph{Well-posedness of Hersch--Szeg\H{o}'s center of mass by hyperbolic energy minimization.} Submitted.
\arxiv{2006.12531}.

\bibitem{N02}
N. Nadirashvili. 
\emph{Isoperimetric inequality for the second eigenvalue of a sphere.} 
J. Differential Geom. 61 (2002), no. 2, 335--340.

\bibitem{OR09}
E. Outerelo and J. M. Ruiz.
Mapping Degree Theory. 
Graduate Studies in Mathematics, 108. American Mathematical Society, Providence, RI; Real Sociedad Matem\'{a}tica Espa\~{n}ola, Madrid, 2009.

\bibitem{P14}
R. Petrides. 
\emph{Maximization of the second conformal eigenvalue of spheres.} 
Proc. Amer. Math. Soc. 142 (2014), no. 7, 2385--2394.

\bibitem{P18}
R. Petrides. 
\emph{On the existence of metrics which maximize Laplace eigenvalues on surfaces.} 
Int. Math. Res. Not. IMRN 2018, no.\ 14, 4261--4355.

\bibitem{S54}
G. Szeg\H{o}. 
\emph{Inequalities for certain eigenvalues of a membrane of given area.}  
J. Rational Mech. Anal. 3 (1954), 343--356.

\bibitem{WX20}
Q. Wang and C. Xia. 
\emph{On a conjecture of Ashbaugh and Benguria about lower eigenvalues of the Neumann Laplacian.}
\arxiv{1808.09520}

\bibitem{W56}
H. F. Weinberger. 
\emph{An isoperimetric inequality for the $N$-dimensional free membrane problem.} 
J. Rational Mech. Anal. 5 (1956), 633--636.

\bibitem{X95}
Y. Xu. 
\emph{The first nonzero eigenvalue of Neumann problem on Riemannian manifolds.}  
J. Geom. Anal. 5 (1995), no. 1, 151--165. 

\end{thebibliography}

\end{document}